\numberwithin{equation}{subsection}
\newtheorem{theorem}{Theorem}[section]
\newtheorem*{theorem*}{Theorem}
\newtheorem{lemma}[theorem]{Lemma}
\newtheorem{proposition}[theorem]{Proposition}
\newtheorem{corollary}[theorem]{Corollary}
\newtheorem*{corollary*}{Corollary}
\theoremstyle{remark}
\newtheorem{definition}[theorem]{Definition}
\theoremstyle{remark}
\newtheorem{example}[theorem]{Example}
\theoremstyle{remark}
\newtheorem{remark}[theorem]{Remark}
\theoremstyle{remark}
\newtheorem{notation}[theorem]{Notation}
\DeclareMathOperator{\Id}{Id}
\newcommand{\too}{\longrightarrow}
\newcommand{\dg}{\mathrm{dg}}
\newcommand{\dgHo}{\mathrm{H}}
\newcommand{\cA}{{\mathcal A}}
\newcommand{\cB}{{\mathcal B}}
\newcommand{\cC}{{\mathcal C}}
\newcommand{\cD}{{\mathcal D}}
\newcommand{\cE}{{\mathcal E}}
\newcommand{\cF}{{\mathcal F}}
\newcommand{\cG}{{\mathcal G}}
\newcommand{\cH}{{\mathcal H}}
\newcommand{\cL}{{\mathcal L}}
\newcommand{\cN}{{\mathcal N}}
\newcommand{\cO}{{\mathcal O}}
\newcommand{\cP}{{\mathcal P}}
\newcommand{\cT}{{\mathcal T}}
\newcommand{\bbA}{\mathbb{A}}
\newcommand{\bbP}{\mathbb{P}}
\newcommand{\bbS}{\mathbb{S}}
\newcommand{\bbN}{\mathbb{N}}
\newcommand{\bbZ}{\mathbb{Z}}
\newcommand{\bfL}{\mathbf{L}}
\newcommand{\bfR}{\mathbf{R}}
\newcommand{\op}{\mathrm{op}} 
\newcommand{\ie}{\textsl{i.e.}\ }
\newcommand{\eg}{\textsl{e.g.}}
\newcommand{\Hmo}{\mathrm{Hmo}}
\newcommand{\perf}{\mathrm{perf}} 
\newcommand{\Hom}{\mathrm{Hom}} 
\newcommand{\rep}{\mathrm{rep}} 
\newcommand{\dgcat}{\mathrm{dgcat}}
\newcommand{\dgdgcat}{\mathrm{dgdgcat}}
\newcommand{\Spt}{\mathrm{Spt}}
\newcommand{\internalcomment}[1]{}
\title[$\bbA^1$-homotopy invariants of dg orbit categories]{$\bbA^1$-homotopy invariants of dg orbit categories}
\author{Gon{\c c}alo~Tabuada}
\address{Gon{\c c}alo Tabuada, Department of Mathematics, MIT, Cambridge, MA 02139, USA}
\email{tabuada@math.mit.edu}
\urladdr{http://math.mit.edu/~tabuada}
\subjclass[2000]{13F60, 14A22, 14F05, 14J60, 19D25, 19D35, 19D55}
\date{\today}
\keywords{Dg orbit category, $\bbA^1$-homotopy, algebraic $K$-theory, cluster category, Kleinian singularities, Fourier-Mukai transform, noncommutative algebraic~geometry.}
\thanks{The author was partially supported by a NSF CAREER Award.}
\begin{document}
\begin{abstract}
Let $\cA$ be a dg category, $F:\cA \to \cA$ a dg functor inducing an equivalence of categories in degree-zero cohomology, and $\cA/F$ the associated dg orbit category. For every $\bbA^1$-homotopy invariant $E$ (\eg\ homotopy $K$-theory, $K$-theory with coefficients, {\'e}tale $K$-theory, and periodic cyclic homology), we construct a distinguished triangle expressing $E(\cA/F)$ as the cone of the endomorphism $E(F)-\Id$ of $E(\cA)$. In the particular case where $F$ is the identity dg functor, this triangle splits and gives rise to the fundamental theorem. As a first application, we compute the $\bbA^1$-homotopy invariants of cluster (dg) categories, and consequently of Kleinian singularities, using solely the Coxeter matrix. As a second application, we compute the $\bbA^1$-homotopy invariants of the dg orbit categories associated to Fourier-Mukai~autoequivalences.
\end{abstract}
\maketitle

\vskip-\baselineskip
\vskip-\baselineskip
\section{Introduction and statement of results}\label{sec:introduction}
\subsection*{Dg orbit categories}
A {\em differential graded (=dg) category $\cA$}, over a base commutative ring $k$, is a category enriched over complexes of $k$-modules; see \S\ref{sec:dg}. Every (dg) $k$-algebra $A$ gives naturally rise to a dg category with a single object. Another source of examples is provided by schemes since the category of perfect complexes $\perf(X)$ of every quasi-compact quasi-separated $k$-scheme $X$ admits a canonical dg enhancement $\perf_\dg(X)$. When $k$ is a field and $X$ is quasi-projective, this dg enhancement is moreover unique; see Lunts-Orlov \cite[Thm.~2.12]{LO}. In what follows, we will denote by $\dgcat(k)$ the category of small dg categories and dg functors.

Let $F:\cA \to \cA$ be a dg functor which induces an equivalence of categories $\dgHo^0(F): \dgHo^0(\cA) \stackrel{\simeq}{\to} \dgHo^0(\cA)$. Motivated by the theory of cluster algebras, Keller introduced in \cite[\S5.1]{Doc} the associated  {\em dg orbit category} $\cA/F^\bbZ$; see \S\ref{sec:orbit}. This dg category has the same objects as $\cA$ and complexes of $k$-modules defined as 
$$ (\cA/F^\bbZ)(x,y):= \mathrm{colim}_{p\geq 0} \bigoplus_{n \geq 0} \cA(F^n(x),F^p(y))\,.$$
The canonical dg functor $\pi:\cA \to \cA/F^\bbZ$ comes equipped with a morphism of dg functors $\epsilon:\pi \Rightarrow \pi \circ F$, which becomes invertible in $\dgHo^0(\cA/F^\bbZ)$, and the pair $(\pi,\epsilon)$ is the solution of a universal problem; consult \cite[\S9]{Doc} for details. Intuitively speaking, the dg category $\cA/F^\bbZ$ encodes all the information concerning the orbits of the ``homotopical $\bbZ$-action'' on $\cA$. Note that in the particular case where $\cA$ is a $k$-algebra $A$, the dg functor $F$ reduces to a $k$-algebra automorphism $\sigma: A \simeq A$ and the dg orbit category $\cA/F^\bbZ$ reduces to the crossed product $k$-algebra $A \rtimes_\sigma \bbZ$.
\subsection*{$\bbA^1$-homotopy invariants}
A functor $E:\dgcat(k) \to \cT$, with values in a  triangulated category, is called an {\em $\bbA^1$-homotopy invariant} if it satisfies the conditions:
\begin{itemize}
\item[(i)] It inverts the {\em Morita equivalences}, \ie the dg functors $\cA \to \cB$ which induce an equivalence of derived categories $\cD(\cA) \stackrel{\simeq}{\to}\cD(\cB)$; see \S\ref{sec:dg}.
\item[(ii)] It sends short exact sequences of dg categories (see \cite[\S4.6]{ICM-Keller})  
to triangles
\begin{eqnarray*}
0 \to \cA \to \cB \to \cC \to 0 &\mapsto&
E(\cA) \to E(\cB) \to E(\cC) \stackrel{\partial}{\to} \Sigma(E(\cA))\,.
\end{eqnarray*}
\item[(iii)] It inverts the dg functors $\cA \to \cA[t]$, where $\cA[t]$ stands for the tensor product of $\cA$ with the $k$-algebra of polynomials $k[t]$.
\end{itemize}
\begin{example}[Homotopy $K$-theory]\label{ex:1}
Weibel's homotopy $K$-theory gives rise to an $\bbA^1$-homotopy invariant $KH:\dgcat(k) \to \Spt$ with values in the homotopy category of spectra; see \cite[\S2]{Fundamental}\cite[\S5.3]{A1-homotopy}. When applied to $A$, resp. to $\perf_\dg(X)$, it agrees with the homotopy $K$-theory of  $A$, resp. of $X$. Given a prime power $l^\nu$, we can also consider mod-$l^\nu$ homotopy $K$-theory $KH(-;\bbZ/l): \dgcat(k) \to \Spt$. 
\end{example}
\begin{example}[$K$-theory with coefficents]
When $1/l \in k$, mod-$l^\nu$ algebraic $K$-theory gives rise to an $\bbA^1$-homotopy invariant $K(-;\bbZ/l^\nu):\dgcat(k) \to \Spt$; see \cite[\S1]{Klein}. In the same vein, when $l$ is nilpotent in $k$, we have the $\bbA^1$-homotopy invariant $K(-)\otimes \bbZ[1/l]: \dgcat(k) \to \Spt$. When applied to $A$, resp. to $\perf_\dg(X)$, these invariants agree with the algebraic $K$-theory with coefficients of $A$, resp. of $X$.
\end{example}
\begin{example}[{\'E}tale $K$-theory]
Dwyer-Friedlander's {\'e}tale $K$-theory gives rise to an $\bbA^1$-homotopy invariant $K^{et}(-;\bbZ/l^\nu):\dgcat(k) \to \Spt$, where $l^\nu$ be a prime power with $l$ odd; see \cite[\S5.4]{A1-homotopy}. When $1/l \in k$ and the $k$-scheme $X$ is regular and of finite type over $\bbZ[1/l]$, $K^{et}(\perf_\dg(X); \bbZ/l^\nu)$ agrees with the {\'e}tale $K$-theory of $X$.
\end{example}
\begin{example}[Periodic cyclic homology]\label{ex:4}
Let $k$ be a field of characteristic zero. Connes' periodic cyclic homology gives rise to an $\bbA^1$-homotopy invariant $HP: \dgcat(k) \to \cD_{\bbZ/2}(k)$ with values in the derived category of $\bbZ/2$-graded $k$-vector spaces; see \cite[\S3]{Fundamental}. When applied to $A$, resp. to $\perf_\dg(X)$, it agrees with the periodic cyclic homology of $A$, resp. of $X$. Moreover, when the $k$-scheme $X$ is smooth, the Hochschild-Kostant-Rosenberg theorem furnish us the following identifications with de Rham cohomology:
\begin{equation}\label{eq:Hd}
HP^+(\perf_\dg(X))\simeq \bigoplus_{n\,\mathrm{even}} H^n_{dR}(X) \quad \,\, HP^-(\perf_\dg(X))\simeq \bigoplus_{n\, \mathrm{odd}} H^n_{dR}(X)\,.
\end{equation}
\end{example}
\subsection*{Statement of results}
Let $\cA$ be a dg category and $F:\cA \to \cA$ a dg functor which induces an equivalence of categories $\dgHo^0(F)$. Our main result is the following:
\begin{theorem}\label{thm:main}
For every $\bbA^1$-homotopy invariant $E: \dgcat(k)\to \cT$, we have the following distinguished triangle:
\begin{equation}\label{eq:triangle-searched}
E(\cA) \stackrel{E(F)-\Id}{\too} E(\cA) \stackrel{E(\pi)}{\too} E(\cA/F^\bbZ) \stackrel{\partial}{\too} \Sigma (E(\cA))\,.
\end{equation}
\end{theorem}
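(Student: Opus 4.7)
The plan is to realize the triangle \eqref{eq:triangle-searched} as the image, under $E$, of a short exact sequence of dg categories
$$0 \too \cA \stackrel{\iota}{\too} \cB \stackrel{q}{\too} \cA/F^\bbZ \too 0$$
in the sense of \cite[\S4.6]{ICM-Keller}, where $\cB$ is an auxiliary dg category constructed from $\cA$ and $F$ in such a way that (a) property~(iii) forces $E(\cB)\simeq E(\cA)$, and (b) under this identification the map $E(\iota)$ becomes $E(F)-\Id$. Given such data, property~(ii) then produces \eqref{eq:triangle-searched} immediately.

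A natural candidate for $\cB$ is a twisted polynomial dg category $\cA[t]_F$, freely generated over $\cA$ by a natural transformation $t:\id_\cA\Rightarrow F$ subject to the interchange rule $t_y\circ f = F(f)\circ t_x$ for every $f:x\to y$ in $\cA$. By the universal property of the dg orbit category (see \cite[\S9]{Doc}), there is a canonical dg functor $q:\cA[t]_F\to \cA/F^\bbZ$ sending the formal natural transformation $t$ to the universal $\epsilon:\pi\Rightarrow \pi\circ F$; the inclusion $\iota$ is tautological. Short exactness should be proved by identifying $\cA/F^\bbZ$ with the Drinfeld/Verdier localization of $\cA[t]_F$ obtained by formally inverting $t$---an identification that can be checked at the level of derived categories by unpacking the colimit defining $(\cA/F^\bbZ)(x,y)$ as a filtered union indexed by negative powers of $t$.

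The identification $E(\cB)\simeq E(\cA)$ and the reading of $E(\iota)$ as $E(F)-\Id$ should then follow from property~(iii) by a Bass--Heller--Swan-style argument: the retraction $\cA[t]_F\to \cA$ (setting $t$ to $0$) splits $E(\cB)$ as $E(\cA)$ after invoking polynomial invariance, while the natural transformation $t$ encodes the ``difference'' between $\id_\cA$ and $F$. Evaluating this difference after $\bbA^1$-collapse then identifies $E(\iota)$ with $E(F)-\Id$.

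The principal obstacle is verifying the exactness of the sequence: one must show that $\cA$ embeds as a full dg subcategory of $\cA[t]_F$ whose Drinfeld quotient recovers $\cA/F^\bbZ$. Concretely, this requires a Yoneda-type comparison of the derived categories of modules $\cD(\cA[t]_F)$ and $\cD(\cA/F^\bbZ)$, tracking how inverting $t$ interacts with the colimit in the orbit construction. Once this technical verification is complete, the remaining $\bbA^1$-invariance steps are essentially formal and closely parallel the proof of the classical Bass--Heller--Swan theorem.
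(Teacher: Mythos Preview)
Your middle term $\cA[t]_F$ is essentially the dg category the paper denotes $\cA/F^\bbN$ (with $(\cA/F^\bbN)(x,y)=\bigoplus_{n\ge 0}\cA(F^n(x),y)$ and your $t_x$ corresponding to the paper's $\epsilon'_x$), and your identification of $\cA/F^\bbZ$ as the localization inverting $t$ is exactly Step~I of the paper's proof. Likewise, your use of the retraction ``$t\mapsto 0$'' together with $\bbA^1$-invariance to obtain $E(\cA[t]_F)\simeq E(\cA)$ is the paper's Step~IV grading argument. So the architecture is right.

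The genuine gap is in your short exact sequence. The ``tautological'' inclusion $\iota\colon\cA\hookrightarrow\cA[t]_F$ cannot be the kernel of $q$: the composite $q\circ\iota$ is precisely $\pi\colon\cA\to\cA/F^\bbZ$, which is not zero (indeed $\pi$ is essentially surjective). In a short exact sequence of dg categories the first term must be, up to Morita equivalence, the full subcategory of objects annihilated by the quotient functor. When you invert $t$, the objects that die are the \emph{cones} $\mathrm{cone}(t_x\colon x\to F(x))$, not the objects $x$ themselves. So the correct sequence is
\[
0 \too \cA' \too \cA[t]_F \too \cA/F^\bbZ \too 0,
\]
where $\cA'$ is the full dg subcategory on these cones. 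This is what the paper proves in Step~I.

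What you are missing, and what costs real work, is the identification of $\cA'$ with $\cA$ \emph{after applying $E$}. The paper does this in two further steps: it shows (Step~II) that $\cA'$ is Morita equivalent to the square-zero extension $\cA\ltimes\mathrm{B}_1$ with $\mathrm{B}_1(x,y)=\cA(y,F(x))[1]$, and then (Step~IV again) uses the evident $\bbN_0$-grading on $\cA\ltimes\mathrm{B}_1$ and $\bbA^1$-invariance to conclude $E(\cA\ltimes\mathrm{B}_1)\simeq E(\cA)$. Only then (Step~III, via the universal additive invariant $U$) can one read off that the induced map $E(\cA)\to E(\cA)$ is $E(F)-\Id$: it comes from the class $[\mathrm{cone}({}_{\pi'}\mathrm{B}\Rightarrow{}_{\pi'\circ F}\mathrm{B})]=[{}_{\pi'\circ F}\mathrm{B}]-[{}_{\pi'}\mathrm{B}]$ in $K_0\rep(\cA,\cA/F^\bbN)$. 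Your sentence ``the natural transformation $t$ encodes the difference between $\id_\cA$ and $F$'' is pointing in the right direction, but it is a statement about the kernel $\cA'$, not about the tautological inclusion of $\cA$; making it precise requires the cone/square-zero identification above.
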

Note that since $F$ is a Morita equivalence, the above condition (i) implies that $E(F)$ is an automorphism. Hence, we obtain an induced $\bbZ$-action on $E(\cA)$. Theorem \ref{thm:main} shows us then that the dg orbit category $\cA/F^\bbZ$ can be thought of as a ``model'' for the orbits of this $\bbZ$-action on $E(\cA)$. 

\begin{corollary}\label{cor:main}
Assume that $KH(\cA)$ agrees with the algebraic $K$-theory of $\cA$.
\begin{itemize}
\item[(i)] We have $KH_n(\cA/F^\bbZ)=0$ for $n <0$ and $KH_0(\cA/F^\bbZ)$ identifies with the cokernel of the endomorphism $K_0(F)-\Id$ of $K_0(\cA)$. The same holds {\em mutatis mutandis} for $K$-theory with coefficients and {\'e}tale $K$-theory.
\item[(ii)] If the category $\dgHo^0(\cA/F^\bbZ)$ is triangulated and idempotent complete, then $KH_0(\cA/F^\bbZ)$ identifies also with the Grothendieck group of $\dgHo^0(\cA/F^\bbZ)$.
\end{itemize}
\end{corollary}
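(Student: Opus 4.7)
The approach for part (i) is to apply Theorem \ref{thm:main} to the $\bbA^1$-homotopy invariant $E = KH$ of Example \ref{ex:1} and to read off the long exact sequence of homotopy groups attached to the triangle \eqref{eq:triangle-searched}:
$$\cdots \to KH_n(\cA) \xrightarrow{KH_n(F) - \Id} KH_n(\cA) \to KH_n(\cA/F^\bbZ) \to KH_{n-1}(\cA) \to \cdots.$$
The assumption that $KH(\cA)$ agrees with $K(\cA)$ is the standard $K$-regularity condition, and it forces $KH_n(\cA) = K_n(\cA) = 0$ for all $n < 0$ (a classical consequence of Bass' fundamental theorem). Substituting this vanishing into the long exact sequence immediately yields $KH_n(\cA/F^\bbZ) = 0$ for $n < 0$ together with an exact sequence $K_0(\cA) \xrightarrow{K_0(F) - \Id} K_0(\cA) \to KH_0(\cA/F^\bbZ) \to 0$, identifying $KH_0(\cA/F^\bbZ)$ with the cokernel of $K_0(F) - \Id$. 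The proof for $K(-;\bbZ/l^\nu)$ and $K^{et}(-;\bbZ/l^\nu)$ is identical: Theorem \ref{thm:main} applies since these functors are $\bbA^1$-homotopy invariants, and the universal coefficient sequence transports the vanishing of negative $K$-theory to the mod-$l^\nu$ setting.

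For part (ii), the plan is to combine the description $KH_0(\cA/F^\bbZ) \cong \mathrm{coker}(K_0(F) - \Id)$ from part (i) with a direct computation of the Grothendieck group $K_0(\dgHo^0(\cA/F^\bbZ))$ coming from the triangulated structure. Since the universal morphism $\epsilon: \pi \Rightarrow \pi \circ F$ becomes invertible in $\dgHo^0(\cA/F^\bbZ)$, one has $[\pi(Fx)] = [\pi(x)]$ in the Grothendieck group for every object $x$ of $\cA$; combined with the fact that the image of $\pi$ generates $\dgHo^0(\cA/F^\bbZ)$ as a thick triangulated subcategory, this yields a canonical surjection $\mathrm{coker}(K_0(F) - \Id) \twoheadrightarrow K_0(\dgHo^0(\cA/F^\bbZ))$. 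Post-composing with the natural comparison map $K_0(\dgHo^0(\cA/F^\bbZ)) \to KH_0(\cA/F^\bbZ)$ and the isomorphism of part (i) recovers the identity on $\mathrm{coker}(K_0(F) - \Id)$, so both maps in the composite are isomorphisms. The main obstacle is the bookkeeping required to verify commutativity of the relevant triangle of maps out of $K_0(\cA)$, which reduces to naturality of the $K \to KH$ comparison together with the universal property of the dg orbit category.
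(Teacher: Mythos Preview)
Your proposal is correct and follows essentially the same route as the paper. For part~(i) the paper does exactly what you do: take the long exact sequence of homotopy groups attached to the triangle~\eqref{eq:triangle-searched} with $E=KH$ and use $K_n(\cA)=0$ for $n<0$. (One quibble: this vanishing is not really a ``consequence of Bass' fundamental theorem''---connective $K$-theory is concentrated in non-negative degrees by construction, so the hypothesis $KH(\cA)\simeq K(\cA)$ forces $KH_n(\cA)=0$ for $n<0$ directly.) For part~(ii) the paper packages the same sandwich argument into the commutative square
\[
\xymatrix@C=2.5em@R=1.3em{
K_0(\cA) \ar[d]_-\sim \ar[r]^-{K_0(F)-\Id} & K_0(\cA) \ar[d]_-\sim \ar[r]^-{K_0(\pi)} & K_0(\cA/F^\bbZ) \ar[d] \\
KH_0(\cA) \ar[r]_-{KH_0(F)-\Id} & KH_0(\cA) \ar[r]_-{KH_0(\pi)} & KH_0(\cA/F^\bbZ)
}
\]
and observes that $K_0(\pi)$ is surjective simply because $\dgHo^0(\pi)$ is (essentially) surjective on objects---your ``generation as a thick subcategory'' is more than is needed, since $\pi$ is literally bijective on objects. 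Otherwise the logic (surjection from the cokernel, composite with the comparison map recovers the isomorphism of part~(i), hence both factors are isomorphisms) is identical.
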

\begin{remark}
The assumption of Corollary \ref{cor:main} holds for example when $\cA$ is a regular $k$-algebra $A$ or the dg category of perfect complexes of a regular $k$-scheme $X$; see Weibel~\cite[Ex.~1.4 and Prop.~6.10]{Weibel-KH}.
\end{remark}
\begin{corollary}\label{cor:HP}
We have the following six-term exact sequence:
\begin{equation}\label{eq:six-term}
\xymatrix@C=3em@R=1.5em{
HP^+(\cA) \ar[rr]^-{HP^+(F)-\Id} && HP^+(\cA)  \ar[d]^-{HP^+(\pi)} \\
HP^-(\cA/F^\bbZ) \ar[u]^-\partial && HP^+(\cA/F^\bbZ) \ar[d]^-\partial \\
HP^-(\cA) \ar[u]^-{HP^-(\pi)} && HP^-(\cA) \ar[ll]^-{HP^-(F)-\Id} \,.
}
\end{equation}
\end{corollary}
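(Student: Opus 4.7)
The plan is to apply Theorem \ref{thm:main} to the $\bbA^1$-homotopy invariant $HP:\dgcat(k)\to\cD_{\bbZ/2}(k)$ recalled in Example \ref{ex:4}. This yields immediately the distinguished triangle
$$HP(\cA) \stackrel{HP(F)-\Id}{\too} HP(\cA) \stackrel{HP(\pi)}{\too} HP(\cA/F^\bbZ) \stackrel{\partial}{\too} \Sigma(HP(\cA))$$
in the derived category $\cD_{\bbZ/2}(k)$ of $\bbZ/2$-graded $k$-vector spaces.

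Next, I would pass to cohomology. Any distinguished triangle in a triangulated category induces a long exact sequence of cohomology groups. In $\cD_{\bbZ/2}(k)$, cohomology of an object $M$ consists of only two groups $(H^+(M), H^-(M))$, and the suspension functor interchanges the two gradings: $(\Sigma M)^+ \simeq M^-$ and $(\Sigma M)^- \simeq M^+$. Consequently the long exact sequence is $\bbZ/2$-periodic and collapses to exactly six terms. Unpacking this description, with the horizontal maps induced by $HP^\pm(F)-\Id$ and $HP^\pm(\pi)$, and the two vertical connecting maps given by the $+$ and $-$ components of $\partial$, recovers precisely the six-term exact sequence \eqref{eq:six-term}.

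There is no real obstacle: the result is a purely formal consequence of Theorem \ref{thm:main} combined with the elementary behavior of the shift in $\cD_{\bbZ/2}(k)$. The only point worth verifying explicitly is that the single boundary map $\partial$ from the triangle decomposes correctly into the two connecting maps $HP^+(\cA/F^\bbZ)\to HP^-(\cA)$ and $HP^-(\cA/F^\bbZ)\to HP^+(\cA)$ appearing in \eqref{eq:six-term}, which follows at once from the identifications $(\Sigma M)^\pm \simeq M^\mp$.
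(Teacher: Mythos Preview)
Your proposal is correct and follows exactly the same approach as the paper: the paper's proof is the single line ``It follows from the long exact sequence of cohomology groups associated to the distinguished triangle \eqref{eq:triangle-searched} (with $E=HP$).'' Your version simply unpacks this by spelling out why the long exact sequence in $\cD_{\bbZ/2}(k)$ collapses to six terms.
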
 
\begin{proof}
If follows from the long exact sequence of cohomology groups associated to the distinguished triangle \eqref{eq:triangle-searched} (with $E=HP$).
\end{proof}
\section{Applications}\label{sec:examples}
In this section we apply Theorem \ref{thm:main} to different choices of $\cA$ and $F$. 
\subsection*{Identity dg functor}
When $F$ is the identity dg functor, the dg orbit category $\cA/F^\bbZ$ can be identified with the tensor product $\cA[t,t^{-1}]$ of $\cA$ with the $k$-algebra of Laurent polynomial $k[t,t^{-1}]$. Moreover, the morphism $E(F)-\Id$ becomes trivial. As a consequence, the triangle \eqref{eq:triangle-searched} splits and we obtain the following result (which was previously obtained in \cite{Fundamental} in the particular case where $k$ is a~field): 
\begin{corollary}[Fundamental theorem]\label{cor:fundamental}
For every $\bbA^1$-homotopy invariant $E$, we have an isomorphism $E(\cA[t,t^{-1}]) \simeq E(\cA) \oplus \Sigma(E(\cA))$.
\end{corollary}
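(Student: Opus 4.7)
The plan is to specialize Theorem~\ref{thm:main} to $F = \id_\cA$ and exploit the vanishing of the resulting first morphism.

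First, I would verify the identification $\cA/\id^\bbZ \simeq \cA[t,t^{-1}]$ stated in the paragraph preceding the corollary. Substituting $F = \id$ in the defining colimit reduces the hom complex $(\cA/\id^\bbZ)(x,y)$ to $\colim_{p \geq 0} \bigoplus_{n \geq 0} \cA(x,y)$; unwinding the transition maps of the colimit (which shift the summation index $n \mapsto n+1$) identifies this colimit with $\bigoplus_{n \in \bbZ} \cA(x,y) \simeq \cA(x,y) \otimes_k k[t,t^{-1}]$, and compatibility with compositions promotes these point-wise isomorphisms to a Morita equivalence of dg categories.

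Next, apply Theorem~\ref{thm:main}. Since $E$ is a functor, $E(\id_\cA) = \Id_{E(\cA)}$, so the endomorphism $E(F) - \Id$ vanishes and \eqref{eq:triangle-searched} specializes to the distinguished triangle
\begin{equation*}
E(\cA) \stackrel{0}{\too} E(\cA) \stackrel{E(\pi)}{\too} E(\cA[t,t^{-1}]) \stackrel{\partial}{\too} \Sigma E(\cA).
\end{equation*}
Finally, I would invoke the standard fact that every distinguished triangle in $\cT$ whose first morphism is zero splits: the direct sum of the distinguished triangles $E(\cA) \stackrel{\id}{\to} E(\cA) \to 0$ and $0 \to \Sigma E(\cA) \stackrel{\id}{\to} \Sigma E(\cA)$ is distinguished of the form $E(\cA) \stackrel{0}{\to} E(\cA) \to E(\cA) \oplus \Sigma E(\cA) \to \Sigma E(\cA)$, and a five-lemma comparison with the specialization above (using the identities on the first two vertices) yields the desired isomorphism $E(\cA[t,t^{-1}]) \simeq E(\cA) \oplus \Sigma E(\cA)$.

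The only nontrivial step is the first one --- pinning down the identification $\cA/\id^\bbZ \simeq \cA[t,t^{-1}]$ at the dg level (or at least up to a Morita equivalence, which is all $E$ sees); the remaining steps are purely formal consequences of Theorem~\ref{thm:main} and elementary triangulated-category yoga.
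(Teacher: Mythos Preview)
Your proposal is correct and follows exactly the same route as the paper: identify $\cA/\id^{\bbZ}$ with $\cA[t,t^{-1}]$, observe that $E(\id)-\Id=0$, and conclude that the triangle \eqref{eq:triangle-searched} splits. You simply supply more detail than the paper does (the explicit reindexing of the colimit and the five-lemma argument for the splitting), but the argument is identical in substance.
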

By applying Corollary \ref{cor:fundamental} to the above Examples \ref{ex:1}-\ref{ex:4}, we hence obtain the fundamental theorem in homotopy $K$-theory, in $K$-theory with coefficients, in {\'e}tale $K$-theory, and in periodic cyclic homology!
\begin{remark}
In the particular case of $k$-algebras and quasi-compact quasi-separated $k$-schemes, the fundamental theorem in homotopy $K$-theory, resp. in periodic cyclic homology, was originally obtained by Weibel \cite{Weibel-KH}, resp. Kassel~\cite{Kassel}. Weibel's proof makes use of the Bass-Quillen's fundamental theorem while Kassel's proof makes use of the relation between periodic cyclic homology and infinitesimal~cohomology.
\end{remark}
\begin{remark}[Inner automorphisms]
In the case where $\cA$ is a dg $k$-algebra $A$ and $F$ a inner automorphism $\sigma: A \simeq A, a \mapsto uau^{-1}$, we have $E(F)=\Id$. This follows from the isomorphism of $A\text{-}A$-bimodules ${}_{\Id} \mathrm{B} \stackrel{\sim}{\to} {}_\sigma \mathrm{B}, a \mapsto ua$ (see \S\ref{sec:dg}-\ref{sec:NCmotives}), and from the fact that $E({}_{\Id} \mathrm{B})=\Id$ (see Lemma \ref{lem:universal}). As a consequence, we still obtain a ``fundamental'' isomorphism $E(\cA \rtimes_\sigma \bbZ) \simeq E(A) \oplus \Sigma(E(A))$.
\end{remark}
%
\subsection*{Suspension dg functor}
Assume that the dg category $\cA$ is pretriangulated in the sense of Bondal-Kapranov \cite[\S3]{BK}. In this case, we can choose for $F$ any of the suspension dg functors $\Sigma^n:\cA \to \cA, n \in \bbZ$.
\begin{example}[$2$-periodic complexes]
Let $k$ be a field, $A$ a finite dimensional hereditary $k$-algebra, $\cD^b(A)$ the bounded derived category of finitely generated right $A$-modules, and $\cD^b_\dg(A)$ the canonical dg enhancement of $\cD^b(A)$. As proved by Peng-Xiao in \cite[Appendix]{PX}, $\dgHo^0(\cD^b_\dg(A)/(\Sigma^2)^\bbZ)$ can be identified with the (triangulated) homotopy category of $2$-periodic complexes of projective right $A$-modules.
\end{example} 
\begin{proposition}\label{prop:suspension}
For every $\bbA^1$-homotopy invariant $E$, we have $E(\Sigma^n)=(-1)^n \Id$. Consequently, we obtain the following computations
$$ E(\cA/(\Sigma^n)^\bbZ) \simeq \left\{  \begin{array}{ll} E(\cA) \oplus \Sigma(E(\cA)) & \mathrm{when}\,\,n\,\,\mathrm{is}\,\,\mathrm{even}\\
E(\cA)/2 & \mathrm{when}\,\,n\,\,\mathrm{is}\,\,\mathrm{odd}\,, \end{array} \right.
$$
where $E(\cA)/2$ stands for the cone of the $2$-fold multiple of the identity of $E(\cA)$.
\end{proposition}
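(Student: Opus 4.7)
The plan is to reduce the computation to the single identity $E(\Sigma) = -\Id$, and then apply Theorem \ref{thm:main} with $F = \Sigma^n$.

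First I would establish $E(\Sigma) = -\Id_{E(\cA)}$. Since $\cA$ is pretriangulated, every object $x \in \cA$ fits in a canonical distinguished triangle $x \to 0 \to \Sigma x \to \Sigma x$ in $\dgHo^0(\cA)$. Assembled pointwise, these yield a distinguished triangle of $\cA$-$\cA$-bimodules
\[
{}_\Id \mathrm{B} \too 0 \too {}_\Sigma \mathrm{B} \too \Sigma({}_\Id \mathrm{B})\,,
\]
where ${}_F \mathrm{B}$ denotes the bimodule representing an endofunctor $F$, in the notation of the Inner automorphisms remark. The additivity enjoyed by any $\bbA^1$-homotopy invariant --- obtained by factoring $E$ through the universal additive invariant $\Uadd$, whose triangulated target records bimodule triangles as honest triangles of morphisms --- converts this into the additive relation $E(\Id) + E(\Sigma) = 0$ in $\mathrm{End}_\cT(E(\cA))$, and hence $E(\Sigma) = -\Id$. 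By composition, $E(\Sigma^n) = E(\Sigma)^n = (-1)^n \Id$ for every $n \in \bbZ$.

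Next I would apply Theorem \ref{thm:main} with $F = \Sigma^n$. The morphism $E(F) - \Id$ in \eqref{eq:triangle-searched} becomes multiplication by $(-1)^n - 1$ on $E(\cA)$. When $n$ is even this vanishes, so the distinguished triangle splits and one obtains $E(\cA/(\Sigma^n)^\bbZ) \simeq E(\cA) \oplus \Sigma E(\cA)$. When $n$ is odd it equals $-2 \cdot \Id$, whose cone coincides with that of $2 \cdot \Id$ (the sign is absorbed by the automorphism $-\Id$ of the source); by definition this common cone is $E(\cA)/2$.

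The main obstacle lies in justifying the additivity step giving $E(\Sigma) = -\Id$: condition (ii) in the definition of an $\bbA^1$-homotopy invariant concerns short exact sequences of dg categories, not arbitrary triangles of $\cA$-$\cA$-bimodules, so one must bootstrap. Either package the pointwise triangles $x \to 0 \to \Sigma x$ into a short exact sequence of dg categories built from the arrow construction $\Mor(\cA)$ (with its source/cone/target functors playing the role of a Waldhausen-additivity triple), or argue universally through the factorization of $E$ across the additive motivator and use that $\Uadd$ already encodes such relations by construction.
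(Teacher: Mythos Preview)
Your approach is correct and is in essence the paper's argument, but the paper carries it out more directly and your ``main obstacle'' dissolves once you use the factorization you yourself mention. The paper invokes Lemma~\ref{lem:universal} to factor $E$ through $U:\dgcat(k)\to\Hmo_0(k)$; since $\Hom_{\Hmo_0(k)}(\cA,\cA)=K_0\rep(\cA,\cA)$ is by definition the Grothendieck group of a triangulated category, the relation $[M[1]]=-[M]$ is already built in, and one simply observes the bimodule identity ${}_{\Sigma^n}\mathrm{B}=({}_{\Id}\mathrm{B})[n]$ to get $U(\Sigma^n)=(-1)^n[{}_{\Id}\mathrm{B}]=(-1)^n\Id$ in one stroke, without first reducing to $n=1$ or packaging anything into a short exact sequence of dg categories. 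Your triangle ${}_{\Id}\mathrm{B}\to 0\to {}_{\Sigma}\mathrm{B}$ is just the case $n=1$ of this same observation, and your suggested bootstraps (arrow categories, Waldhausen-style additivity) are unnecessary detours around a relation that $\Hmo_0(k)$ already encodes. The deduction of the two cases from Theorem~\ref{thm:main} is exactly as in the paper.
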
 
\begin{example}
When $n$ is even, the $2$-periodicity of $HP$ implies that
$$ HP^+(\cA/(\Sigma^n)^\bbZ) \simeq HP^-(\cA/(\Sigma^n)^\bbZ)\simeq HP^+(\cA) \oplus HP^-(\cA)\,.$$
On the other hand, since multiplication by $2$ is an automorphism of $HP(\cA)$, we conclude that $HP(\cA/(\Sigma^n)^\bbZ)=0$ whenever $n$ is odd. 
\end{example}
\begin{remark}[Universal coefficient sequence]
Given an object $b \in \cT$ and $m \in \bbZ$, let us write $E^b_m(-)$ for the functor $\Hom_\cT(\Sigma^m(b),-)$. For example, when $E=KH$ and $b$ is the sphere spectrum $\bbS$, the functor $E_m^b(-)$ identifies with $KH_m(-)$. Note that when $n$ is odd the distinguished triangle $E(\cA) \stackrel{\cdot 2}{\to} E(\cA) \to E(\cA)/2 \to \Sigma(E(\cA))$ gives rise to the following short exact sequence of abelian groups:
$$ 0 \too E^b_m(\cA) \otimes_\bbZ \bbZ/2 \too E^b_m(\cA/(\Sigma^n)^\bbZ) \too \{2\text{-}\mathrm{torsion}\,\,\mathrm{in}\,\,E^b_{m-1}(\cA)\} \too 0\,.$$
\end{remark}
\subsection*{Finite dimensional algebras of finite global dimension}
Let $k$ be an algebraically closed field and $A$ a finite dimensional $k$-algebra of finite global dimension. The Grothendieck group of $\cD^b(A)$ is free and  a canonical basis is given by the Grothendieck classes $[S_1], \ldots, [S_m]$ of the simple right $A$-modules. Therefore, given a dg functor $F: \cD^b_\dg(A) \to \cD^b_\dg(A)$ which induces an equivalence of categories $\dgHo^0(F)$, the associated group automorphism of $K_0(\cD^b(A))$ can be written as an invertible matrix $M(F) \in \mathrm{Mat}_{m \times m}(\bbZ)$.
\begin{proposition}\label{prop:finite}
For every $\bbA^1$-homotopy invariant $E$, the following holds:
\begin{itemize}
\item[(i)] We have a canonical isomorphism $E(\cD^b_\dg(A)) \simeq \bigoplus_{i=1}^m E(k)$.
\item[(ii)] Under (i), the automorphism $E(F)$ is given by the matrix $M(F)$.
\end{itemize}
\end{proposition}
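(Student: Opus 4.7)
Proof proposal:

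For Part (i), the plan is to reduce $E(\cD^b_\dg(A))$ to the $E$-theory of a semisimple algebra via two successive identifications. First, I would observe that since $A$ has finite global dimension, the Yoneda embedding $A \hookrightarrow \cD^b_\dg(A)$ (viewing $A$ as a one-object dg category) is a Morita equivalence, so condition (i) of an $\bbA^1$-homotopy invariant gives $E(\cD^b_\dg(A)) \simeq E(A)$. Next, I would identify $E(A) \simeq E(A/J)$, where $J$ is the Jacobson radical (which is nilpotent because $A$ is finite-dimensional): for each of the invariants in Examples~\ref{ex:1}--\ref{ex:4} this nilpotent invariance is classical (Weibel's theorem for $KH$, Gabber-Suslin rigidity for $K$-theory with coefficients, Goodwillie's theorem for $HP$), and for a general $\bbA^1$-homotopy invariant it should follow by factoring $E$ through the universal $\bbA^1$-homotopy invariant combined with the Wedderburn-Malcev section $A/J \hookrightarrow A$. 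Finally, since $k$ is algebraically closed, Wedderburn's theorem gives $A/J \simeq \prod_{i=1}^m \mathrm{Mat}_{n_i}(k)$, and Morita invariance of $E$ yields $E(A/J) \simeq \bigoplus_{i=1}^m E(\mathrm{Mat}_{n_i}(k)) \simeq \bigoplus_{i=1}^m E(k)$.

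For Part (ii), I would note that the canonical isomorphism of (i) is compatible with the identification $K_0(\cD^b(A)) \simeq \bbZ^m$ on basis $\{[S_i]\}$, so by definition of $M(F)$ the action of $K_0(F)$ is multiplication by $M(F)$. To promote this to the full invariant $E$, I would apply $E$ to the Jordan-H\"older filtration of $F(S_i)$ in $\cD^b(A)$: its composition factors are shifts $\Sigma^a S_j$ of simples, and the additivity of $E$ on the resulting distinguished triangles shows that the $(j,i)$-entry of $E(F)$ in the decomposition of (i) equals $\sum (-1)^a \cdot \mathrm{id}_{E(k)}$, where the sum runs over composition factors of $F(S_i)$ of type $\Sigma^a S_j$. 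By the definition of $M(F)$ via $K_0$, this sum is exactly $M(F)_{ji} \cdot \mathrm{id}_{E(k)}$, as claimed.

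The main obstacle is establishing $E(A) \simeq E(A/J)$ for an \emph{arbitrary} $\bbA^1$-homotopy invariant: while nilpotent invariance holds by dedicated theorems for each specific example, a uniform proof likely requires invoking the universal $\bbA^1$-homotopy invariant constructed from the noncommutative-motives framework developed in the author's prior work.
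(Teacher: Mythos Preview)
Your approach to (i) differs substantially from the paper's and contains the gap you yourself flag. The paper never passes through $A/J$ or invokes nil-invariance: instead it shows that the dg functors $s_i\colon k \to \cD^b_\dg(A)$ associated to the simple modules already induce an isomorphism $\bigoplus_{i=1}^m U(k) \simeq U(\cD^b_\dg(A))$ in the category of noncommutative motives $\Hmo_0(k)$ (citing \cite[Rk.~3.19]{Azumaya}), and then transfers this to any $E$ via the factorization $E = \overline{E}\circ U$ of Lemma~\ref{lem:universal}. This uses only that $E$ is an \emph{additive} invariant; condition~(iii) is never needed. By contrast, your route genuinely needs nil-invariance of an arbitrary $\bbA^1$-homotopy invariant, and the Wedderburn--Malcev section only exhibits $E(A/J)$ as a retract of $E(A)$, not as all of it. Iterating along the filtration $A \twoheadrightarrow A/J^{n-1} \twoheadrightarrow \cdots \twoheadrightarrow A/J$ does not obviously help: to apply Proposition~\ref{prop:grading} at each square-zero step one would need an algebra section of $A/J^{k+1} \to A/J^k$, and Wedderburn--Malcev does not supply this once the target is no longer semisimple.

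For (ii) the paper again works in $\Hmo_0(k)$: since $\mathrm{End}_{\Hmo_0(k)}(U(k))\simeq\bbZ$, the endomorphism $U(F)$ of $\bigoplus_i U(k)$ is determined, via the Yoneda lemma, by its effect under $\Hom_{\Hmo_0(k)}(U(k),-)$, which is exactly the $K_0$-automorphism $M(F)$; one then applies $\overline{E}$. Your Jordan--H\"older argument reaches the same conclusion, but the step ``additivity of $E$ on the resulting distinguished triangles'' is precisely the assertion that $E$ applied to a dg functor $k \to \cD^b_\dg(A)$ depends only on the $K_0$-class of the corresponding object---i.e.\ the factorization through $\Hmo_0(k)$ again. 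So your (ii) is correct but is a by-hand unwinding of the paper's Yoneda argument, while your (i) has a real gap that the paper's motive-level decomposition sidesteps entirely.
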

\subsection*{Cluster categories}
Let $k$ be an algebraically closed field, $Q$ a finite quiver without oriented cycles, and $kQ$ the associated path $k$-algebra. In this case, we can choose for $F$ any of the following dg functors
\begin{eqnarray*}
\tau^{-1} \Sigma^n : \cD^b_\dg(kQ) \too \cD^b_\dg(kQ) && n \geq 0\,,
\end{eqnarray*}
where $\tau$ stands for the Auslander-Reiten translation.
\begin{example}\label{ex:cluster1}
When $n=0$ and $Q$ is of Dynkin type $A$, $D$ or $E$, the category $\cC_Q^{(0)}:= \dgHo^0(\cD^b_\dg(kQ)/(\tau^{-1})^\bbZ)$ identifies with the triangulated category of finite dimensional projective modules over the preprojective algebra $\Lambda(Q)$; see~\cite[\S7.3]{Doc}.
\end{example}
\begin{example}\label{ex:cluster2}
When $n \geq 1$, the category $\cC_Q^{(n)}:= \dgHo^0(\cD^b_\dg(kQ)/(\tau^{-1} \Sigma^n)^\bbZ)$ is called the {\em $(n)$-cluster category of $Q$}; see Reiten \cite[\S3.9]{Reiten}. Thanks to the work of Keller \cite[\S4 Thm.~1]{Doc}, $\cC_Q$ is a triangulated category. These categories play nowadays a key role in representation theory of finite dimensional algebras.
\end{example}
%
\begin{corollary}\label{prop:cluster}
For every $\bbA^1$-homotopy invariant $E$, we have the triangles
$$ \bigoplus_{i=1}^m E(k) \stackrel{(-1)^n C_Q - \Id}{\too} \bigoplus_{i=1}^m E(k) \too E(\cD^b_\dg(kQ)/(\tau^{-1}\Sigma^n)^\bbZ) \too \bigoplus_{i=1}^m \Sigma(E(k))\,,$$
where $C_Q$ stands for the Coxeter matrix of $Q$. Moreover, the Grothendieck group\footnote{When $n=0$ we are implicitly assuming that the quiver $Q$ is as in the above Example \ref{ex:cluster1}. Otherwise, as explained by Keller in \cite[\S3]{Doc}, the category $\cC^{(0)}_Q$ may not be triangulated.} of $\cC^{(n)}_Q$ identifies with the cokernel of the endomorphism $(-1)^n C_Q - \Id$ of $\bigoplus_{i=1}^m \bbZ$.
\end{corollary}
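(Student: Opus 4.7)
The plan is to apply Theorem \ref{thm:main} to $\cA := \cD^b_\dg(kQ)$ and to the dg functor $F := \tau^{-1}\Sigma^n$, obtaining at once the distinguished triangle
$$ E(\cA) \stackrel{E(F)-\Id}{\too} E(\cA) \too E(\cA/F^\bbZ) \too \Sigma(E(\cA))\,. $$
Since $k$ is algebraically closed and $kQ$ is a finite dimensional hereditary $k$-algebra (hence of finite global dimension), Proposition \ref{prop:finite}(i) furnishes a canonical isomorphism $E(\cA) \simeq \bigoplus_{i=1}^m E(k)$, where $m$ denotes the number of vertices of $Q$. Substituting this into the triangle yields the shape asserted in the statement, modulo identifying the endomorphism.

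The next step is to show that, under the above isomorphism, $E(F)$ is given by the matrix $(-1)^n C_Q$. By Proposition \ref{prop:finite}(ii), the matrix $M(F)\in\mathrm{Mat}_{m\times m}(\bbZ)$ describes the action of $F$ on $K_0(\cD^b(kQ))$ in the basis $[S_1],\ldots,[S_m]$ of classes of simple modules. Since $[\Sigma X]=-[X]$ in the Grothendieck group, one has $M(\Sigma^n)=(-1)^n\Id$ (consistently with Proposition \ref{prop:suspension}). The remaining input is the classical fact from quiver representation theory that the inverse Auslander-Reiten translation $\tau^{-1}$ acts on $K_0(\cD^b(kQ))$ precisely as the Coxeter matrix $C_Q$ in the basis of simples. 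Composing yields $M(F)=(-1)^n C_Q$, which gives the triangle in the statement.

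For the Grothendieck group computation, the plan is to invoke Corollary \ref{cor:main}. Since $kQ$ is regular, the hypothesis that $KH(\cA)$ agrees with the algebraic $K$-theory of $\cA$ is satisfied. Under the hypotheses of the footnote, $\dgHo^0(\cA/F^\bbZ)=\cC^{(n)}_Q$ is triangulated and idempotent complete: for $n\geq 1$ this is Keller's theorem (Example \ref{ex:cluster2}), and for $n=0$ it follows from the identification with finite dimensional projective $\Lambda(Q)$-modules in the Dynkin case (Example \ref{ex:cluster1}). Corollary \ref{cor:main}(ii) then identifies $K_0(\cC^{(n)}_Q)$ with $KH_0(\cA/F^\bbZ)$, which Corollary \ref{cor:main}(i) computes as the cokernel of $K_0(F)-\Id=(-1)^n C_Q-\Id$ on $\bigoplus_{i=1}^m\bbZ$. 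The main point requiring care, rather than a genuine obstacle, is the matrix identification $M(\tau^{-1})=C_Q$: this is standard in representation theory but depends on sign conventions for the Coxeter matrix, so the chosen normalization of $C_Q$ must be fixed consistently with the one defining $\tau^{-1}$ on $K_0$.
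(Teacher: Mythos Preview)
Your proposal is correct and follows essentially the same route as the paper: apply Theorem~\ref{thm:main}, use Proposition~\ref{prop:finite} to identify $E(\cD^b_\dg(kQ))$ and the matrix $M(F)$, invoke Proposition~\ref{prop:suspension} and the classical fact $M(\tau^{-1})=C_Q$, and then deduce the Grothendieck group statement from Corollary~\ref{cor:main} using regularity of $kQ$. The only small point is that Examples~\ref{ex:cluster1}--\ref{ex:cluster2} supply triangulatedness but not idempotent completeness; the paper invokes \cite[Prop.~1.2]{BMRRT} for the latter, so you should cite that (or an equivalent source) rather than the examples alone.
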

\begin{remark}
The second claim of Proposition \ref{prop:cluster} was obtained independently by Barot-Dussin-Lenzing \cite[\S3]{BDL} in the particular case where $n=1$.
\end{remark}
\begin{proof}
It is well-known that $M(\tau^{-1})$ identifies with the Coxeter matrix $C_Q$ of $Q$. Therefore, the proof of the first claim follows from the combination of Theorem \ref{thm:main} with Propositions \ref{prop:suspension} and \ref{prop:finite}. In what concerns the second claim, it follows from Corollary \ref{cor:main} since $kQ$ is a regular $k$-algebra and the cluster categories $\cC_Q^{(n)}$ are not only triangulated but also idempotent complete (see \cite[Prop.1.2]{BMRRT}).
\end{proof}
\subsection*{Kleinian singularities}
Let $k$ be an algebraically closed field of characteristic zero. As explained by Amiot-Iyama-Reiten in \cite{Amiot}, the category $\cC^{(0)}_Q$ of Example \ref{ex:cluster1} is triangle-equivalent to the stable category of maximal Cohen-Macaulay modules over the Kleinian hypersurface associated to the quiver $Q$. For example, when $Q$ is the Dynkin quiver $A_s\colon 1 \to 2 \to \cdots \to s$, $\cC^{(0)}_{A_s}$ identifies with the category $\underline{\mathrm{MCM}}(R_s)$ where $R_s:=k[x,y,z]/(x^{s+1} + yz)$. Roughly speaking, the latter category\footnote{Also usually known as the category of singularities; see Buchweitz \cite{Buchweitz} and Orlov \cite{Orlov,Orlov1}.} encodes all the information concerning the isolated singularity of the Kleinian hypersurface.
\begin{proposition}\label{prop:computation}
We have an isomorphism $K_0(\underline{\mathrm{MCM}}(R_s))\simeq \bbZ/(s+1)$.
\end{proposition}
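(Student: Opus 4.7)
The plan is to combine the Amiot-Iyama-Reiten triangle equivalence with Corollary \ref{prop:cluster} to turn the assertion into a Smith-normal-form computation, and then to exploit a factorization of $C_{A_s} - \Id$ through the Cartan matrix of type $A_s$.

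First I would invoke the Amiot-Iyama-Reiten equivalence $\underline{\mathrm{MCM}}(R_s) \simeq \cC^{(0)}_{A_s}$ recalled just above the statement, which gives $K_0(\underline{\mathrm{MCM}}(R_s)) \simeq K_0(\cC^{(0)}_{A_s})$. Applying Corollary \ref{prop:cluster} with $n=0$ and $Q=A_s : 1\to 2\to\cdots\to s$, I would then identify this group with the cokernel of $C_{A_s}-\Id$ acting on $\bbZ^s$, the matrix being written in the basis of classes $[S_1],\ldots,[S_s]$ of the simple $kA_s$-modules. The hypotheses of the corollary are satisfied, since $kA_s$ is hereditary (hence regular) and the cluster category $\cC^{(0)}_{A_s}$ is triangulated and idempotent complete.

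Next, choosing the linear orientation of $A_s$, the Euler matrix is $E = I - N$ with $N$ the nilpotent shift (ones on the superdiagonal). In particular $E$ is upper triangular unipotent, whence $E^{-T}\in GL_s(\bbZ)$. Using the classical formula $C_{A_s} = -E^{-T}E$ for the Coxeter transformation of an acyclic quiver, I would obtain the key factorization $C_{A_s}-\Id = -E^{-T}(E + E^T) = -E^{-T}A^{\mathrm{Cart}}_{A_s}$, where $A^{\mathrm{Cart}}_{A_s}=2I-N-N^T$ is the classical (symmetric) Cartan matrix of type $A_s$. Since $E^{-T}$ is $\bbZ$-invertible, one gets $\mathrm{coker}(C_{A_s}-\Id) \simeq \mathrm{coker}(A^{\mathrm{Cart}}_{A_s})$.

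It remains to identify this cokernel with $\bbZ/(s+1)$. The matrix $A^{\mathrm{Cart}}_{A_s}$ has determinant $s+1$, while deleting its first row and column leaves $A^{\mathrm{Cart}}_{A_{s-1}}$, of determinant $s$. Since $\gcd(s,s+1)=1$, the $(s-1)$st determinantal divisor equals $1$, so the Smith normal form is $\mathrm{diag}(1,\ldots,1,s+1)$. The only point requiring real care is the bookkeeping with Euler-form conventions that produces the clean factorization through $E+E^T$; once that step is in place, the remainder is classical linear algebra over $\bbZ$ (and agrees with the standard identification of the weight lattice modulo the root lattice of type $A_s$ with $\bbZ/(s+1)$).
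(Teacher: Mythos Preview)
Your proposal is correct and begins exactly as the paper does: you invoke the Amiot--Iyama--Reiten equivalence and Corollary~\ref{prop:cluster} (with $n=0$) to reduce the computation of $K_0(\underline{\mathrm{MCM}}(R_s))$ to the cokernel of $C_{A_s}-\Id$ on $\bbZ^s$. The difference lies entirely in how that cokernel is computed. The paper simply writes down the explicit matrix of $C_{A_s}-\Id$ (citing Auslander--Reiten--Smal{\o}) and asserts that its cokernel is $\bbZ/(s+1)$, exhibiting the image of $(0,\ldots,0,-1)$ as a generator. You instead factor $C_{A_s}-\Id=-E^{-T}(E+E^T)$ with $E=I-N$ unipotent, recognize $E+E^T$ as the symmetric Cartan matrix of type $A_s$, and read off the Smith normal form from the determinantal divisors $\det A^{\mathrm{Cart}}_{A_s}=s+1$ and $\det A^{\mathrm{Cart}}_{A_{s-1}}=s$. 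Your route is slightly more conceptual---it explains \emph{why} the answer is $\bbZ/(s+1)$ by identifying the cokernel with the weight lattice modulo the root lattice of type $A_s$---whereas the paper's argument is a bare hands verification tied to a specific presentation of the Coxeter matrix. Both are short and valid; yours has the advantage of being orientation-independent and of generalizing immediately to the other Dynkin types.
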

\begin{remark}\label{rk:reduced}
By definition, the category of singularities $\cD_{\mathrm{sing}}(R_s)$ is defined as the Verdier quotient $\cD^b(R_s)/\perf(R_s)$. Consequently, we have the exact sequence
\begin{equation}\label{eq:ses-last}
K_0(R_s) \too G_0(R_s) \too K_0(\underline{\mathrm{MCM}}(R_s))\,.
\end{equation}
Since $R_s$ is a local ring, $K_0(R_s)\simeq \bbZ$. Moreover, the composition of $K_0(R_s) \to G_0(R_s) $ with the rank map $G_0(R_s) \to \bbZ$ is the identity. This implies that \eqref{eq:ses-last} gives rise to an injective homomorphism $G_0(R_s)/\bbZ \to K_0(\underline{\mathrm{MCM}}(R_s))$. Making use of previous work of Brieskorn \cite{Brieskorn} on class groups, Herzog-Sanders proved in \cite[Thm.~2.1]{HS} that $G_0(R_s)/\bbZ\simeq \bbZ/(s+1)$. Thanks to Proposition \ref{prop:computation}, we hence conclude that $G_0(R_s)/\bbZ \simeq K_0(\underline{\mathrm{MCM}}(R_s))$.
\end{remark}
\begin{remark}[$K$-theory with coefficients]
Making use of Corollary \ref{prop:cluster} and of previous work of Suslin \cite{Suslin}, the author established in \cite{Klein} a complete computation of all the (higher) algebraic $K$-theory with coefficients of the Kleinian singularities.
\end{remark}
\subsection*{A cyclic quotient singularity}
Let $k$ be an algebraically closed field of characteristic zero. Consider the $\bbZ/3$-action on the power series ring $k\llbracket x,y,z\rrbracket$ given by multiplication with a primitive third root of unit. As proved by Keller-Reiten in \cite[\S2]{KR}, the stable category of maximal Cohen-Macaulay modules $\underline{\mathrm{MCM}}(R)$ over the fixed point ring $R:=k\llbracket x,y,z\rrbracket^{\bbZ/3}$ is triangle-equivalence to the $(1)$-cluster category of the generalized Kronecker quiver $Q\!:\!\!\!\xymatrix@C=1.7em@R=1em{1\ar@<0.7ex>[r]\ar[r]\ar@<-0.7ex>[r] & 2}$.
\begin{proposition}\label{prop:computation2}
We have an isomorphism $K_0(\underline{\mathrm{MCM}}(R))\simeq \bbZ/3 \times \bbZ/3$.
\end{proposition}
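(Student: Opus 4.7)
The plan is to chain the Keller--Reiten triangle equivalence with Corollary \ref{prop:cluster} applied to the generalized Kronecker quiver $Q$ (three arrows from vertex $1$ to vertex $2$). Since the Grothendieck group is an invariant of the triangulated category, the cited equivalence $\underline{\mathrm{MCM}}(R)\simeq\cC^{(1)}_Q$ reduces the proposition to computing $K_0(\cC^{(1)}_Q)$. By the second assertion of Corollary \ref{prop:cluster} (with $n=1$ and $m=2$), this group identifies with the cokernel of the endomorphism $-C_Q - \Id$ of $\bbZ \oplus \bbZ$, where $C_Q$ is the Coxeter matrix of $Q$.

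The next step is to write $C_Q$ down explicitly. For the $r$-Kronecker quiver, the Coxeter transformation $\Phi$ is determined by the relation $\Phi([P_i])=-[I_i]$ on projectives and injectives; a direct computation in the basis of simples $\{[S_1],[S_2]\}$ (using $[P_1]=[S_1]+r[S_2]$, $[P_2]=[S_2]$, $[I_1]=[S_1]$, $[I_2]=r[S_1]+[S_2]$) yields
\[
C_Q = \begin{pmatrix} r^2-1 & -r \\ r & -1 \end{pmatrix}.
\]
Specialising to $r=3$, the endomorphism $-C_Q - \Id$ is represented by the matrix $\left(\begin{smallmatrix} -9 & 3 \\ -3 & 0 \end{smallmatrix}\right)$. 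I would then read off the Smith normal form: the determinant equals $9$ and the entrywise greatest common divisor equals $3$, so the invariant factors are $(3,3)$. Hence the cokernel on $\bbZ^2$ is $\bbZ/3 \oplus \bbZ/3$, proving the claim.

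The main ``obstacle'' here is really just bookkeeping of conventions: one must check that the Keller--Reiten identification is with the $(1)$-cluster category in the sense of Example \ref{ex:cluster2} (so that $n=1$ and the relevant endomorphism is $-C_Q - \Id$, not $C_Q - \Id$), and that the form chosen for $C_Q$ agrees with the Coxeter matrix referred to in Corollary \ref{prop:cluster}. Both are standard facts, and the substance of the argument is just the $2\times 2$ Smith normal form computation above.
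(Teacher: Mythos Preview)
Your proof is correct and follows essentially the same route as the paper: both invoke Corollary~\ref{prop:cluster} with $n=1$ for the $3$-Kronecker quiver, arrive at the matrix $\left(\begin{smallmatrix} -9 & 3 \\ -3 & 0 \end{smallmatrix}\right)$ for $-C_Q-\Id$, and compute its cokernel on $\bbZ^2$ to be $\bbZ/3\times\bbZ/3$. You supply a bit more detail (the general $r$-Kronecker Coxeter matrix and the Smith normal form argument), whereas the paper simply records the matrix and names explicit generators $(1,0)$ and $(-3,-1)$ for the cokernel.
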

To the best of the author's knowledge, this computation is new in the literature.
\begin{notation}
In order to simplify the exposition of the next two subsections, we will write $-\otimes -, q_\ast$, and $\Delta_\ast$, instead of $-\otimes^\bfL-,\bfR q_\ast$, and $\bfR \Delta_\ast$,~respectively.
\end{notation}
\subsection*{Line bundles}
Let $k$ be a field, $X$ a smooth projective $k$-scheme, $\cL$ a line bundle on $X$, and $n \in \bbZ$. In this case, we can choose for $F$ the following dg functor:
\begin{eqnarray}\label{eq:equivalence-bundle}
\perf_\dg(X) \too \perf_\dg(X) && \cF \mapsto \cF \otimes \cL[n]\,.
 \end{eqnarray}
\begin{example}
When $n:=\mathrm{dim}(X)$ and $\cL$ is the canonical line bundle $\omega_X := \bigwedge^nT_X^\ast$, the associated dg functor $-\otimes \omega_X[n]$ is called the {\em Serre dg functor of $X$}.
\end{example} 
\begin{remark}
When the canonical or anti-canonical line bundle of $X$ is ample, Bondal-Orlov proved in \cite[Thm.~3.1]{BO} that modulo the autoequivalences of $\perf(X)$ induced by the automorphisms of $X$, all the autoequivalences are as in \eqref{eq:equivalence-bundle}.
\end{remark}
The following example shows that in some particular cases the dg orbit category associated to \eqref{eq:equivalence-bundle} can alternatively be constructed using an automorphism.
\begin{example}[Abelian varieties]
Let $k$ be an algebraic closed field, $\mathrm{A}$ an abelian variety, $\widehat{\mathrm{A}}$ its dual, and $\cP$ the Poincar{\'e} bundle on $\widehat{\mathrm{A}} \times \mathrm{A}$. Given $\alpha \in \widehat{\mathrm{A}}$, consider the line bundle $\cP_\alpha:= \cP_{|\alpha \times \mathrm{A}}$ on $\mathrm{A}$ and the translation automorphism $t_\alpha: \widehat{\mathrm{A}} \simeq \widehat{\mathrm{A}}$. Thanks to the work of Mukai \cite[Thm.~2.2]{Mukai}, the Fourier-Mukai dg functor 
\begin{eqnarray}
\Phi_\cP: \perf_\dg(\mathrm{A}) \too \perf_\dg(\widehat{\mathrm{A}}) && \cF \mapsto q_\ast(p^\ast(\cF)\otimes \cP)\,,
\end{eqnarray}
where $\Delta: X \to X\times X$ is the diagonal morphism and $q,p: X \times X \to X$ the first and second projections, is a Morita equivalence, Moreover, $(-\otimes \cP_\alpha) \Phi_\cP \simeq \Phi_\cP t_\alpha^\ast$; see \cite[\S3.1]{Mukai}. As a consequence, we obtain an induced Morita equivalence between the dg orbit categories $\perf_\dg(\mathrm{A})/(-\otimes \cP_\alpha)^\bbZ$ and $\perf_\dg(\widehat{\mathrm{A}})/(t_\alpha^\ast)^\bbZ$.
\end{example}
Since the $k$-scheme $X$ is regular, homotopy $K$-theory $KH(X)$ agrees with algebraic $K$-theory $K(X)$. Consequently, Theorem \ref{thm:main} (with $E=KH$) combined with Proposition \ref{prop:suspension} give rise to the following distinguished triangle:
$$ K(X) \stackrel{(-1)^nK(-\otimes \cL)-\Id}{\too} K(X) \stackrel{K(\pi)}{\too} KH(\perf_\dg(X)/(-\otimes \cL[n])^\bbZ) \too \Sigma(K(X))\,.$$
\begin{example}[Curves]
When $X$ is a smooth projective curve $C$, we have $K_0(C)\simeq\bbZ \times \mathrm{Pic}(C)$ and the homomorphism $K_0(-\otimes \cL)$ identifies with multiplication by $\cL$. Thanks to Corollary \ref{cor:main}(i), we hence obtain the following computation: 
\begin{equation}\label{eq:computation-Picard}
KH_0(\perf_\dg(C)/(-\otimes \cL[n])^\bbZ) \simeq \left\{  \begin{array}{ll} \bbZ \times \mathrm{Pic}(C)/_{\!\cL=\cO_C} &\mathrm{if}\,\,n\,\,\mathrm{is}\,\,\mathrm{even}\\
\bbZ/2\bbZ \times \mathrm{Pic}(C)/_{\!\cL=\cO_C} & \mathrm{if}\,\,n\,\,\mathrm{is}\,\,\mathrm{odd}\,.\end{array} \right.
\end{equation}
In the particular case where $k$ is algebraically closed, $\cL:=\omega^\ast_C$, and $n \geq 1$, the category $\dgHo^0(\perf_\dg(C)/(-\otimes \omega^\ast_C[n])^\bbZ)$ is known to be triangulated and idempotent complete. This follows from the combination of \cite[\S 9.9 Thm.~6]{Doc} and \cite[Prop.~1.2]{BMRRT} with the fact that the abelian category of coherent $\cO_C$-modules $\mathrm{Coh}(C)$ is hereditary and satisfies the Krull-Schmidt condition. Consequently, thanks to Corollary \ref{cor:main}(ii), the left-hand side of \eqref{eq:computation-Picard} identifies also with the Grothendieck group of the triangulated category $\dgHo^0(\perf_\dg(C)/(-\otimes \omega^\ast_C[n])^\bbZ)$.
\end{example}
In what concerns periodic cyclic homology, we have the following result:
\begin{proposition}\label{prop:line}
When $\cA$ is the dg category $\perf_\dg(X)$ and $F$ the dg functor $-\otimes \cL[n]$, the above $6$-term exact sequence \eqref{eq:six-term} reduces to
$$
\xymatrix@C=2em@R=2em{
\bigoplus_{n\,\mathrm{even}} H^n_{dR}(X)  \ar[rr]^-{(-1)^n(-\cdot \mathrm{ch}(\cL))-\Id} && \bigoplus_{n\,\mathrm{even}} H^n_{dR}(X) \ar[d]^-{HP^+(\pi)} \\
HP^-(\perf_\dg(X)/(-\otimes \cL[n])^\bbZ) \ar[u]^-\partial && HP^+(\perf_\dg(X)/(-\otimes \cL[n])^\bbZ) \ar[d]^-\partial \\
\bigoplus_{n\,\mathrm{odd}} H^n_{dR}(X) \ar[u]^-{HP^-(\pi)} && \bigoplus_{n\,\mathrm{odd}} H^n_{dR}(X) \ar[ll]^-{(-1)^n(-\cdot \mathrm{ch}(\cL))-\Id} \,,
}
$$
where $\mathrm{ch}(\cL) \in \bigoplus_{n} H^{2n}_{dR}(X)$ stands for the Chern character of $\cL$.
\end{proposition}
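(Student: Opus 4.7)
The plan is to specialize the six-term exact sequence \eqref{eq:six-term} of Corollary \ref{cor:HP} to $\cA=\perf_\dg(X)$ and $F=-\otimes\cL[n]$, and then rewrite each term using the Hochschild-Kostant-Rosenberg identifications \eqref{eq:Hd} recalled in Example \ref{ex:4}. Under these, the four outer corners become $\bigoplus_{n\,\mathrm{even}}H^n_{dR}(X)$ or $\bigoplus_{n\,\mathrm{odd}}H^n_{dR}(X)$ automatically, so the only real content of the proposition is to identify the induced endomorphism $HP^\pm(F)$ with $(-1)^n(-\cdot\mathrm{ch}(\cL))$.

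To this end I would factor $F$ as the composition $[n]\circ(-\otimes\cL)$ and treat each factor separately. For the shift, Proposition \ref{prop:suspension} gives $E(\Sigma^n)=(-1)^n\Id$ for every $\bbA^1$-homotopy invariant on a pretriangulated dg category; since $\perf_\dg(X)$ is pretriangulated with suspension equal to $[1]$, this specializes to $HP([n])=(-1)^n\Id$. The remaining and central step is to show that $HP(-\otimes\cL)$ is cup product with the Chern character $\mathrm{ch}(\cL)\in\bigoplus_n H^{2n}_{dR}(X)$ under HKR.

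The cleanest route for this last identification uses the fact that $\perf_\dg(X)$ is a symmetric monoidal dg category, so that $HP(\perf_\dg(X))$ inherits the structure of a $\bbZ/2$-graded-commutative $k$-algebra and every object $\cL\in\perf_\dg(X)$ gives rise to a class $[\cL]_{HP}\in HP_0(\perf_\dg(X))$; because the functor $-\otimes\cL$ is $\perf_\dg(X)$-linear, $HP(-\otimes\cL)$ coincides with multiplication by $[\cL]_{HP}$. The main obstacle is then to identify $[\cL]_{HP}$ with $\mathrm{ch}(\cL)$ under HKR; this is a classical fact going back to work of Weibel on the Chern character of algebraic $K$-theory, and further developed by Markarian and by C\u{a}ld\u{a}raru-Willerton in the context of Hochschild homology and Mukai pairings. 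One can therefore either invoke the cited literature or verify the identification directly by unwinding the HKR quasi-isomorphism and the exponential/trace formula defining $\mathrm{ch}$ on line bundles. Combining $HP([n])=(-1)^n\Id$ with $HP(-\otimes\cL)=-\cdot\mathrm{ch}(\cL)$ then yields the two horizontal arrows in the displayed hexagon, completing the proof.
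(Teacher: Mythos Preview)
Your plan matches the paper on the easy half: both factor $F=[n]\circ(-\otimes\cL)$ and invoke Proposition~\ref{prop:suspension} for the shift. For the factor $-\otimes\cL$ the paper proceeds differently: it rewrites this as the Fourier--Mukai transform $\Phi_{\Delta_\ast\cL}$, applies the general formula from \cite[Thm.~6.7]{Lefschetz} to get $\alpha\mapsto q_\ast\bigl(p^\ast(\alpha)\cdot\mathrm{ch}(\Delta_\ast\cL)\cdot\sqrt{\mathrm{Td}_{X\times X}}\bigr)$, and then simplifies this to $\alpha\cdot\mathrm{ch}(\cL)$ using Grothendieck--Riemann--Roch for $\Delta$ together with the projection formula. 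This keeps everything inside the Fourier--Mukai framework that is reused later for spherical and $\bbP$-twists.

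Your monoidal argument is more conceptual but hides a genuine normalization issue. In the conventions of \cite{Lefschetz} adopted here, the image of an object $\cE$ under the identification \eqref{eq:Hd} is the Mukai vector $\mathrm{ch}(\cE)\sqrt{\mathrm{Td}_X}$, not $\mathrm{ch}(\cE)$ (this is stated explicitly at the end of the proof of Proposition~\ref{prop:spherical}). In particular the unit $\cO_X$ is sent to $\sqrt{\mathrm{Td}_X}\neq 1$, so \eqref{eq:Hd} is \emph{not} a ring isomorphism for the cup product, and the step ``$HP(-\otimes\cL)=$ cup product with $[\cL]_{HP}=$ cup product with $\mathrm{ch}(\cL)$'' does not go through as written. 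The route can be repaired --- the $\sqrt{\mathrm{Td}_X}$ in the class and the one implicit in the transported product cancel --- but carrying this out carefully is essentially the same Grothendieck--Riemann--Roch bookkeeping the paper does directly.
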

\subsection*{Spherical objects}
Let $k$ be a field and $X$ a smooth projective $k$-scheme of dimension $d$. Recall from \cite[Def.~1.1]{ST} that an object $\cE \in \perf(X)$ is called {\em spherical} if $\cE \otimes \omega_X \simeq \cE$ and $\Hom_{\perf(X)}(\cE,\cE[\ast])$ agrees with the cohomology $H^\ast(S^d;k)$ of the $d$-dimensional sphere. Thanks to the work of Seidel-Thomas \cite[Thm.~1.2]{ST}, we can then choose for $F$ the associated spherical twist:
\begin{eqnarray*}
& \perf_\dg(X) \stackrel{\Phi}{\too} \perf_\dg(X) & \cF \mapsto q_\ast(p^\ast(\cF) \otimes \mathrm{cone}\left(q^\ast(\cE^\vee)\otimes p^\ast(\cE) \to \Delta_\ast(\cO_X)\right))\,.
\end{eqnarray*}
\begin{proposition}\label{prop:reduction}
For every $\bbA^1$-homotopy invariant $E$, we have the equality $E(\Phi) = \Id - E(\Phi')$, where $\Phi'$ stands for the Fourier-Mukai dg functor $\Phi_{q^\ast(\cE^\vee)\otimes p^\ast(\cE)}$.
\end{proposition}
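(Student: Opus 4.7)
The plan is to realize $\Phi$, $\Phi'$, and the identity functor as Fourier-Mukai transforms attached to a single distinguished triangle of kernels on $X \times X$, and then to invoke the additivity of $\bbA^1$-homotopy invariants on triangles of bimodules.

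First I would observe that, by the very definition of $\Phi$, its Fourier-Mukai kernel is $K := \mathrm{cone}\bigl(q^\ast(\cE^\vee)\otimes p^\ast(\cE) \to \Delta_\ast(\cO_X)\bigr)$, so that in $\perf(X \times X)$ we obtain a distinguished triangle
$$q^\ast(\cE^\vee)\otimes p^\ast(\cE) \too \Delta_\ast(\cO_X) \too K \too \Sigma\bigl(q^\ast(\cE^\vee)\otimes p^\ast(\cE)\bigr).$$
Applying the Fourier-Mukai assignment $\cK \mapsto q_\ast(p^\ast(-)\otimes \cK)$, which sends perfect complexes on $X \times X$ to $\perf_\dg(X)$-$\perf_\dg(X)$-bimodules and preserves cones, would then yield a distinguished triangle of bimodules $\Phi' \to \Id \to \Phi \to \Sigma \Phi'$, using that $\Delta_\ast(\cO_X)$ represents the identity bimodule and that $q^\ast(\cE^\vee)\otimes p^\ast(\cE)$ is by construction the kernel of $\Phi'$.

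Next I would invoke the additivity of $E$ on bimodules: for any distinguished triangle $M'\to M\to M''$ of $\cA$-$\cB$-bimodules, the induced morphisms in $\cT$ satisfy $E(M) = E(M') + E(M'')$. This is a consequence of axioms (i) and (ii) via Lemma~\ref{lem:universal} and the universal property of the additive motivator, which records triangles of bimodules as sum relations among the induced motivic morphisms. Feeding in the triangle above would then give $\Id = E(\Id) = E(\Phi') + E(\Phi)$, which rearranges to the desired equality $E(\Phi) = \Id - E(\Phi')$.

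The hard part will be the invocation of additivity: while the triangle of Fourier-Mukai kernels is immediate from the cone construction, transporting it to an equation of morphisms in $\cT$ that can be manipulated with minus signs is the real content of the proposition, and is precisely where the universal property of the noncommutative additive motivator enters.
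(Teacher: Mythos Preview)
Your proposal is correct and follows essentially the same approach as the paper. The paper phrases the argument by first computing in $\Hmo_0(k)$ via the universal invariant $U$ --- identifying $U(\Phi)$ with the Grothendieck class $[\Delta_\ast(\cO_X)] - [q^\ast(\cE^\vee)\otimes p^\ast(\cE)]$ in $K_0\,\perf(X\times X)\simeq K_0\,\rep(\perf_\dg(X),\perf_\dg(X))$ --- and then invoking Lemma~\ref{lem:universal} to pass to arbitrary $E$; your ``additivity of $E$ on triangles of bimodules'' is exactly this factorization through $\Hmo_0(k)$, so the two arguments coincide.
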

\begin{remark}[$\bbP$-twists]
Recall from \cite[Def.~1.1]{HT} that an object $\cE \in \perf(X)$ is called a {\em $\bbP^n$-object} if $\cE\otimes \omega_X \simeq \cE$ and $\Hom_{\perf(X)}(\cE,\cE[\ast])\simeq H^\ast(\bbP^n;k)$. Thanks to the work of Huybrechts-Thomas \cite[Prop.~2.6]{HT}, we can also choose for $F$ the associated $\bbP^n$-twist $\Phi$, whose definition is similar to the one above but with $q^\ast(\cE^\vee)\otimes p^\ast(\cE)$ replaced by a slightly more involved object $\cH\in\perf(X \times X)$. Proposition \ref{prop:reduction} holds {\em mutatis mutandis} in this case with $\Phi':=\Phi_\cH$.
\end{remark}
Once again, since the $k$-scheme $X$ is regular, homotopy $K$-theory $KH(X)$ agrees with algebraic $K$-theory $K(X)$. Consequently, Theorem \ref{thm:main} (with $E=KH$) combined with  Proposition \ref{prop:reduction} gives rise to the following distinguished triangle:
$$ K(X) \stackrel{-K(\Phi')}{\too} K(X) \stackrel{K(\pi)}{\too} KH(\perf_\dg(X)/\Phi^\bbZ) \too \Sigma(K(X))\,.$$
It is well-known (see \cite[page 40]{ST}) that the homomorphism $K_0(\Phi')$ is given by
\begin{eqnarray}\label{eq:last}
K_0(X) \too K_0(X) && [\cF] \mapsto \sum_i (-1)^i \mathrm{dim}\, \mathrm{Ext}^i (\cF,\cE) \cdot [\cE]\,.
\end{eqnarray}
Therefore, thanks to Corollary \ref{cor:main}(i), $KH_0(\perf_\dg(X)/\Phi^\bbZ)$ identifies with the cokernel of \eqref{eq:last}. In what concerns periodic cyclic homology, recall from \S\ref{sec:remaining} that $H^\ast_{dR}(X)$ comes equipped with a non-degenerate Mukai pairing $\langle-,-\rangle$. Making use of it, we introduce the following projection homomorphism
\begin{eqnarray}\label{eq:projection}
& \bigoplus_n H^n_{dR}(X) \too \bigoplus_n H^n_{dR}(X) & \alpha \mapsto \langle \alpha, \mathrm{ch}(\cE) \sqrt{\mathrm{Td}_X}\rangle \cdot \mathrm{ch}(\cE) \sqrt{\mathrm{Td}_X}\,,
\end{eqnarray}
where $\sqrt{\mathrm{Td}_X} \in \bigoplus_n H_{dR}^{2n}(X)$ stands for the square root of the Todd class of $X$.
\begin{proposition}\label{prop:spherical}
Via the above identification \eqref{eq:Hd}, $HP^\pm(\Phi')$ agrees with \eqref{eq:projection}. Consequently, Proposition \ref{prop:line} holds {\em mutatis mutandis} with $-\otimes \cL[n]$ replaced by $\Phi$ and $(-1)^n (-\cdot \mathrm{ch}(\cL)) -\Id$ replaced by $(-1)$\eqref{eq:projection}.
\end{proposition}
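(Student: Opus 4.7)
The plan has two parts, mirroring the two sentences of the proposition. The second part is formal and follows exactly the template of Proposition \ref{prop:line}: apply Theorem \ref{thm:main} with $E = HP$, invoke Proposition \ref{prop:reduction} to rewrite the map $HP(\Phi) - \Id$ as $-HP(\Phi')$, and feed the result into the six-term exact sequence of Corollary \ref{cor:HP}. So everything hinges on the first claim, namely the identification of $HP^\pm(\Phi')$ with \eqref{eq:projection}.

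For the first claim, my plan is to appeal to the HKR-compatibility of periodic cyclic homology with Fourier-Mukai transforms. The relevant input, essentially C{\u a}ld{\u a}raru's theorem (with dg refinements by Macri-Stellari and Cisinski-Tabuada), asserts that for any kernel $K \in \perf(X \times X)$ the endomorphism $HP(\Phi_K)$, viewed through the HKR identification \eqref{eq:Hd}, is the cohomological Fourier-Mukai transform
$$\alpha \longmapsto q_\ast\bigl(p^\ast(\alpha) \cdot \mathrm{ch}(K) \cdot \sqrt{p^\ast \mathrm{Td}_X \cdot q^\ast \mathrm{Td}_X}\bigr).$$
Specializing to $K = q^\ast(\cE^\vee) \otimes p^\ast(\cE)$, multiplicativity of the Chern character and its compatibility with pullbacks factor the Mukai vector of $K$ as
$$q^\ast\bigl(\mathrm{ch}(\cE^\vee)\sqrt{\mathrm{Td}_X}\bigr) \cdot p^\ast\bigl(\mathrm{ch}(\cE)\sqrt{\mathrm{Td}_X}\bigr).$$
The projection formula for the first projection $q$ then moves the $q^\ast$-factor outside the pushforward, reducing the computation to the product $\mathrm{ch}(\cE^\vee)\sqrt{\mathrm{Td}_X} \cdot \int_X \alpha \cdot \mathrm{ch}(\cE)\sqrt{\mathrm{Td}_X}$.

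The remaining step is to match this expression with \eqref{eq:projection}. By the definition of the Mukai pairing recalled in \S\ref{sec:remaining}, the integral is precisely $\langle \alpha, \mathrm{ch}(\cE)\sqrt{\mathrm{Td}_X}\rangle$, and the duality convention built into the pairing converts the prefactor $\mathrm{ch}(\cE^\vee)\sqrt{\mathrm{Td}_X}$ into $\mathrm{ch}(\cE)\sqrt{\mathrm{Td}_X}$. I expect the main obstacle to be precisely this sign bookkeeping: one has to verify that the duality involution implicit in C{\u a}ld{\u a}raru's theorem is normalized consistently with the Mukai pairing fixed in \S\ref{sec:remaining}, so that the $\cE^\vee$ appearing naturally from the kernel is absorbed into the $\cE$ on the output side. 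Once this alignment is in place, the whole computation collapses to the projection formula and the definition of the pairing, yielding \eqref{eq:projection}; plugging this into Corollary \ref{cor:HP} produces the advertised six-term sequence.
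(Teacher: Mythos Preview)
Your approach is correct but differs from the paper's. You work on the de Rham side: invoke the general formula $HP(\Phi_K)(\alpha)=q_\ast(p^\ast(\alpha)\cdot \mathrm{ch}(K)\cdot\sqrt{\mathrm{Td}_{X\times X}})$, specialize to $K=q^\ast(\cE^\vee)\otimes p^\ast(\cE)$, and then use the projection formula to factor the result into a scalar times a vector. The paper instead works on the categorical side: it observes that the bimodule ${}_{\Phi'}\mathrm{B}$ factors in $\Hmo(k)$ as
\[
\perf_\dg(X)\xrightarrow{\;{}_{\Id}\mathrm{B}\otimes{}_{\cE^\vee}\mathrm{B}\otimes{}_{\cE}\mathrm{B}\;}\perf_\dg(X)\otimes\perf_\dg(X)^\op\otimes\perf_\dg(X)\xrightarrow{\;\eqref{eq:morphism}\otimes{}_{\Id}\mathrm{B}\;}\perf_\dg(X),
\]
applies the symmetric monoidal functor $HP^\pm$ to this factorization, and reads off $\alpha\mapsto\langle\alpha,HP^\pm(\cE)\rangle\cdot HP^\pm(\cE)$ directly, finishing with the identification $HP^\pm(\cE)=\mathrm{ch}(\cE)\sqrt{\mathrm{Td}_X}$ from \cite[Thm.~6.7]{Lefschetz}.

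The advantage of the paper's route is that it eliminates precisely the obstacle you flagged: because the Mukai pairing in \S\ref{sec:remaining} is \emph{defined} by applying $HP^\pm$ to the evaluation bimodule \eqref{eq:morphism} pre-composed with $(\cF,\cG)\mapsto(\cF,\cG^\vee)$, the $\cE^\vee$ appearing in the kernel is absorbed into the pairing by construction, with no sign normalization to check. Your route is more hands-on and would be natural for a reader more comfortable with classical Chern-character calculus, but you do have to verify that your integral $\int_X\alpha\cdot\mathrm{ch}(\cE)\sqrt{\mathrm{Td}_X}$ really equals $\langle\alpha,\mathrm{ch}(\cE)\sqrt{\mathrm{Td}_X}\rangle$ under the paper's convention, and that the output vector $\mathrm{ch}(\cE^\vee)\sqrt{\mathrm{Td}_X}$ matches $\mathrm{ch}(\cE)\sqrt{\mathrm{Td}_X}$ after the duality twist---this is exactly where the paper's categorical definition pays off.
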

\subsection*{Related work}
Given a (not necessarily unital) $k$-algebra $A$ and a $k$-algebra automorphism $\sigma:A \simeq A$, Corti{\~n}as-Thom proved in \cite[Thm~7.4.1]{CT} that for every $M_\infty$-stable, excisive and homotopy invariant homology theory $E$ of $k$-algebras, we have the following distinguished triangle:
\begin{equation}\label{eq:triangle-CT}
\Sigma^{-1}(E(A)) \stackrel{\partial}{\too} E(A) \stackrel{E(\sigma)-\Id}{\too} E(A) \too E(A \rtimes_\sigma \bbZ)\,.
\end{equation}
Their proof is an adaptation of Cuntz's work \cite{Cuntz}, who extended the original Pimsner-Voiculescu's $6$-term exact sequence \cite{Pimsner} from the realm of operator $K$-theory to the realm of topological algebras. Our proof of Theorem \ref{thm:main} is radically different! (Corti{\~n}as-Thom's arguments don't extend to the dg setting). It is based on a careful study of the derived category of $\cA/F^\bbZ$ using the recent theory of noncommutative motives; see \S\ref{sec:proof}. Due to its generality, which is exemplified in \S\ref{sec:examples}, we believe that Theorem \ref{thm:main} (and Corollaries \ref{cor:main} and \ref{cor:HP}) will be useful for all those mathematicians whose research comes across (dg) orbit categories.
\medbreak\noindent\textbf{Acknowledgments:} The author is very grateful to Guillermo Corti{\~n}as for explanations about the distinguished triangle \eqref{eq:triangle-CT} during an entertaining afternoon walk on Tiananmen Square. The author also would like to thank Christian Haesemeyer for useful discussions concerning Proposition \ref{prop:computation}  and Remark \ref{rk:reduced}.
\section{Preliminaries}
Throughout the article, $k$ will be a base commutative ring. All adjunctions will be displayed vertically with the left  (resp. right) adjoint on the left (resp. right) hand side. Unless stated differently, all tensor products will be taken over $k$.
\subsection{Dg categories}\label{sec:dg}
Let $\cC(k)$ be the category of cochain complexes of $k$-modules. A {\em differential graded (=dg) category $\cA$} is a $\cC(k)$-enriched category and a {\em dg functor} $F:\cA\to \cB$ is a $\cC(k)$-enriched functor. Given dg functors $F,G: \cA \to \cB$, a {\em morphism of dg functors} $\epsilon: F \Rightarrow G$ consists of a family of degree-zero cycles $\epsilon_x \in \cB(F(x),G(x)), x \in \cA$, satisfying the equalities $G(f)\circ \epsilon_x = \epsilon_y\circ F(f)$ for all $f \in \cA(x,y)$; consult Keller's ICM survey \cite{ICM-Keller} for further details. Recall from \S\ref{sec:introduction} that we denote by $\dgcat(k)$ the category of small dg categories and dg functors.

Let $\cA$ be a dg category. The category $\dgHo^0(\cA)$ has the same objects as $\cA$ and $\dgHo^0(\cA)(x,y):=H^0\cA(x,y)$. The opposite dg category $\cA^\op$ has the same objects as $\cA$ and $\cA^\op(x,y):=\cA(y,x)$. A {\em right $\cA$-module} is a dg functor $M:\cA^\op \to \cC_\dg(k)$ with values in the dg category $\cC_\dg(k)$ of cochain complexes of $k$-modules. Let $\cC(\cA)$ be the category of right $\cA$-modules. As explained in \cite[\S3.1]{ICM-Keller}, the dg structure of $\cC_\dg(k)$ makes $\cC(\cA)$ into a dg category $\cC_\dg(\cA)$. Given an object $x \in \cA$, let us write $\widehat{x}: \cA^\op \to \cC_\dg(k)$ for the associated Yoneda right $\cA$-module defined by $y \mapsto \cA(y,x)$. This assignment gives rise to the Yoneda dg functor $\cA \mapsto \cC_\dg(\cA), x \mapsto \widehat{x}$. The {\em derived category $\cD(\cA)$ of $\cA$} is the localization of $\cC(\cA)$ with respect to (objectwise) quasi-isomorphisms. Its subcategory of compact objects will be denoted by $\cD_c(\cA)$. 

Every dg functor $F:\cA \to \cB$ gives rise to the following adjunctions
$$
\xymatrix{
\cC(\cB) \ar@<1ex>[d]^-{F^\ast} && \cD(\cB) \ar@<1ex>[d]^-{F^\ast} \\
\cC(\cA) \ar@<1ex>[u]^-{F_\ast} && \cD(\cA) \ar@<1ex>[u]^-{\bfL F_\ast}\,,
}
$$
where $F^\ast$ is defined by pre-composition with $F$ and $F_\ast$ (resp. $\bfL F_\ast$) is its left adjoint (resp. derived left adjoint). A dg functor $F:\cA\to \cB$ is called a {\em Morita equivalence} if $\bfL F_\ast:\cD(\cA) \stackrel{\simeq}{\to} \cD(\cB)$ is an equivalence of categories. As proved in \cite[Thm.~5.3]{IMRN}, $\dgcat(k)$ admits a Quillen model structure whose weak equivalences are the Morita equivalences. Let us write $\Hmo(k)$ for the associated homotopy category.

The {\em tensor product $\cA\otimes\cB$} of dg categories is defined as follows: the set of objects is the cartesian product of the sets of objects of $\cA$ and $\cB$ and $(\cA\otimes\cB)((x,w),(y,z)):= \cA(x,y) \otimes \cB(w,z)$. As explained in \cite[\S2.3 and \S4.3]{ICM-Keller}, this construction gives rise to symmetric monoidal categories $(\dgcat(k), -\otimes-, k)$ and $(\Hmo(k), -\otimes^\bfL-,k)$.

An {\em $\cA\text{-}\cB$-bimodule} is a dg functor $\mathrm{B}:\cA \otimes^\bfL \cB^\op\to \cC_\dg(k)$ or equivalently a right $(\cA^\op \otimes \cB)$-module. Standard examples are the ``diagonal'' $\cA\text{-}\cA$-bimodule
\begin{eqnarray}\label{eq:bimodule-Id}
{}_{\Id} \mathrm{B}:\cA \otimes^\bfL \cA^\op \too \cC_\dg(k) && (x,y) \mapsto \cA(y,x)
\end{eqnarray}
and more generally the $\cA\text{-}\cB$-bimodule
\begin{eqnarray}\label{eq:bimodules111}
{}_F\mathrm{B}:\cA\otimes^\bfL \cB^\op \too \cC_\dg(k) && (x,w) \mapsto \cB(w,F(x))
\end{eqnarray}
associated to a dg functor $F:\cA \to \cB$. Let us denote by $\rep(\cA,\cB)$ the full triangulated subcategory of $\cD(\cA^\op \otimes^\bfL \cB)$ consisting of those $\cA\text{-}\cB$-bimodules $\mathrm{B}$ such that $\mathrm{B}(x,-) \in \cD_c(\cB)$ for every object $x \in \cA$. 
\subsection{Noncommutative motives}\label{sec:NCmotives}
As proved in \cite[Cor.~5.10]{IMRN}, there is a natural bijection between $\Hom_{\Hmo(k)}(\cA,\cB)$ and the set of isomorphism classes of $\rep(\cA,\cB)$. Under this bijection, the composition law of $\Hmo(k)$ corresponds to
\begin{eqnarray}\label{eq:bimodules11}
\rep(\cA,\cB) \times \rep(\cB,\cC) \too\rep(\cA,\cC) && (\mathrm{B},\mathrm{B}')\mapsto \mathrm{B} \otimes^\bfL_\cB \mathrm{B}'
\end{eqnarray}
and the identity of an object $\cA \in \Hmo(k)$ to the isomorphism class of ${}_{\Id} \mathrm{B}$. Since the $\cA\text{-}\cB$-bimodules \eqref{eq:bimodules111} belong to
$\rep(\cA,\cB)$, we have a symmetric monoidal functor
\begin{eqnarray}\label{eq:functor1}
\dgcat(k) \too \Hmo(k) & \cA \mapsto \cA & F \mapsto {}_F\mathrm{B}\,.
\end{eqnarray}
The category of {\em noncommutative motives} $\Hmo_0(k)$ has the same objects as $\Hmo(k)$ and abelian groups of morphisms given by $\Hom_{\Hmo_0(k)}(\cA,\cB):=K_0\rep(\cA,\cB)$, where $K_0\rep(\cA,\cB)$ stands for the Grothendieck group of the triangulated category $\rep(\cA,\cB)$. The composition law is induced from the bi-triangulated functor \eqref{eq:bimodules11}. In what concerns the symmetric monoidal structure, it is induced by bi-linearity from $\Hmo(k)$. Note that we have a well-defined symmetric monoidal functor
\begin{eqnarray}\label{eq:nat2}
\Hmo(k) \too \Hmo_0(k) &\cA \mapsto \cA& \mathrm{B} \mapsto [\mathrm{B}]\,.
\end{eqnarray}
Let us denote by $U: \dgcat(k) \to \Hmo_0(k)$ the composition of \eqref{eq:functor1} with \eqref{eq:nat2}. For further details on (the category of) noncommutative motives, we invite the reader to consult the survey article \cite{survey} as well as Kontsevich's talks \cite{Miami,IAS}.
\subsection{Dg orbit categories}\label{sec:orbit}
Let $\cA$ be a dg category and $F:\cA \to \cA$ a dg functor. The dg category $\cA/F^\bbN$ has the same objects as $\cA$ and complexes of $k$-modules
\begin{equation}\label{eq:decomp}
(\cA/F^\bbN)(x,y):= \bigoplus_{n \geq 0} \cA(F^n(x),y)\,.
\end{equation}
Given objects $x,y,z$ and morphisms
\begin{eqnarray*}
\mathrm{f}=\{f_n\}_{n \geq 0} \in \bigoplus_{n \geq 0} \cA(F^n(x),y) && \mathrm{g}=\{g_n\}_{n \geq 0} \in \bigoplus_{n \geq 0} \cA(F^n(y),z)\,,
\end{eqnarray*}
the $m^{\mathrm{th}}$-component of the composition $\mathrm{g} \circ \mathrm{f}$ is defined as $\sum_n (g_n \circ F^n(f_{m-n}))$. 

For every object $x \in \cA$, let us denote by $\epsilon'_x=\{\epsilon'_{x,n}\}_{n \geq 0} \in \bigoplus_{n \geq 0} \cA(F^n(x), F(x))$ the morphism in $\cA/F^\bbN$ from $x$ to $F(x)$ such that $\epsilon'_{x,1}=\Id$ and $\epsilon'_{x,n}=0$ for $n \neq 1$.
Note that $\cA/F^\bbN$ comes equipped with the canonical dg functor
\begin{eqnarray*}
\pi': \cA \too \cA/F^\bbN & x \mapsto x & f \mapsto \mathrm{f} = \{f_n\}_{n \geq 0}\,,
\end{eqnarray*}
where $f_0=f$ and $f_n=0$ for $n \neq 0$, and that the assignment $x \mapsto \epsilon'_x$ gives rise to a morphism of dg functors $ \epsilon': \pi' \Rightarrow \pi' \circ F$. Note also that $F$ extends to a well-defined dg functor $F: \cA/F^\bbN \to \cA/F^\bbN$ and that $F(\epsilon'_x)=\epsilon'_{F(x)}$ for every object $x \in \cA$.
\begin{definition}
Let $\cA/F^\bbZ$ be the dg category with the same objects as $\cA$ and with complexes of $k$-modules defined as $(\cA/F^\bbZ)(x,y):= \mathrm{colim}_{p\geq 0} (\cA/F^\bbN)(x,F^p(y))$, where the colimit is induced by the morphisms $\epsilon'_{F^p(y)}: F^p(y) \to F^{p+1}(y)$. The composition law is determined by the following morphisms:
$$
(\cA/F^\bbN)(y,F^{p'}(z)) \otimes (\cA/F^\bbN)(x,F^p(y)) \to (\cA/F^\bbN)(x,F^{p+p'}(z)) \,\,\, (\mathrm{g},\mathrm{f}) \mapsto F^p(\mathrm{g}) \circ \mathrm{f}.
$$
\end{definition}
Note that $\cA/F^\bbZ$ comes equipped with a canonical dg functor $\iota: \cA/F^\bbN \to \cA/F^\bbZ$. Let us denote by $\pi: \cA \to \cA/F^\bbZ$ the composed dg functor $\iota \circ \pi'$ and by $\epsilon: \pi \Rightarrow \pi \circ F$ the morphism of dg functors $\iota \circ \epsilon'$. Recall from  \cite[\S5.1]{Doc} that when the dg functor $F$ induces an equivalence of categories $\dgHo^0(F):\dgHo^0(\cA) \stackrel{\simeq}{\to} \dgHo^0(\cA)$, the morphism of dg functors $\epsilon$ is a (objectwise) quasi-isomorphism.
\section{Proof of the main result}\label{sec:proof}
The proof of Theorem \ref{thm:main} is divided in four steps:
\begin{itemize}
\item[(i)] Firstly, we construct a short exact sequence of dg categories relating $\cA/F^\bbN$ with the dg orbit category $\cA/F^\bbZ$. 
\item[(ii)] Secondly, we express the kernel of this above short exact sequence in terms of a square-zero extension $\cA \ltimes \mathrm{B}_1$.
\item[(iii)] Thirdly, making use of the theory of noncommutative motives, we relate the induced morphism $E(\cA \ltimes \mathrm{B}_1) \to E(\cA/F^\bbN)$ with $E(F) -\Id$.
\item[(iv)] Finally, using appropriate gradings of $\cA\ltimes \mathrm{B}_1$ and $\cA/F^\bbN$, we show that the above two morphisms agree.
\end{itemize}
\subsection*{Step I: Short exact sequence}
Recall from \S\ref{sec:orbit} that for every object $x \in \cA$ we have an associated degree-zero cycle $\epsilon'_x:x \to F(x)$ in $\cA/F^\bbN$. Let $\cA'$ be the full dg subcategory of $\cC_\dg(\cA/F^\bbN)$ consisting of the objects $\mathrm{cone}(\widehat{\epsilon'_x}), x \in \cA$; recall from \cite[Lem.~4.8]{BLL} that in any dg category the cone of a degree-zero cycle is unique up to unique dg isomorphism. We write $\mathrm{B}'$ for the associated~$\cA'\text{-}(\cA/F^\bbN)$-bimodule:
\begin{eqnarray*}
\cA' \otimes^\bfL (\cA/F^\bbN)^\op \too \cC_\dg(k) && (\mathrm{cone}(\widehat{\epsilon'_x}),y) \mapsto \cC_\dg(\cA/F^\bbN)(\widehat{y},\mathrm{cone}(\widehat{\epsilon'_x}))\,.
\end{eqnarray*}
Note that since $\widehat{x}$ and $\widehat{F(x)}$ belong to $\cD_c(\cA/F^\bbN)$, $\mathrm{B}'$ belongs to $\rep(\cA',\cA/F^\bbN)$.
\begin{proposition}
We have the following short exact sequence of dg categories
\begin{equation}\label{eq:ses}
0 \too \cA' \stackrel{\mathrm{B}'}{\too} \cA/F^\bbN \stackrel{{}_\iota\mathrm{B}}{\too} \cA/F^\bbZ \too 0
\end{equation}
in the homotopy category $\Hmo(k)$.
\end{proposition}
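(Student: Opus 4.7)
The plan is to verify Keller's three conditions defining a short exact sequence of dg categories in the Morita homotopy category (cf.\ \cite[\S4.6]{ICM-Keller}): (a) the bimodule $\mathrm{B}'$ induces a fully faithful functor $\bfL\mathrm{B}'_\ast:\cD(\cA')\hookrightarrow \cD(\cA/F^\bbN)$; (b) the bimodule ${}_\iota\mathrm{B}$ induces a Verdier quotient functor $\bfL({}_\iota\mathrm{B})_\ast:\cD(\cA/F^\bbN)\twoheadrightarrow \cD(\cA/F^\bbZ)$; and (c) the essential image of (a) coincides, up to thick closure, with the kernel of (b).

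For (a), I would argue as follows. By construction, $\cA'$ is a full dg subcategory of $\cC_\dg(\cA/F^\bbN)$ whose objects are cones of morphisms between compact Yoneda modules and hence lie in $\cD_c(\cA/F^\bbN)$. Under the dictionary between $\Hmo$-morphisms and bimodules recalled in \S\ref{sec:NCmotives}, the bimodule $\mathrm{B}'$ corresponds precisely to this fully faithful inclusion $\cA'\hookrightarrow \cD_c(\cA/F^\bbN)$, and $\bfL\mathrm{B}'_\ast$ is its unique colimit-preserving extension, which remains fully faithful by a standard density argument on the free $\cA'$-modules.

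For (c), I would use the key fact recorded at the end of \S\ref{sec:orbit}: since $\dgHo^0(F)$ is an equivalence, the natural transformation $\epsilon:\pi \Rightarrow \pi \circ F$ is an objectwise quasi-isomorphism in $\cA/F^\bbZ$. Applying $\bfL({}_\iota\mathrm{B})_\ast$ to the canonical triangle $\widehat{x}\xrightarrow{\widehat{\epsilon'_x}}\widehat{F(x)}\to \mathrm{cone}(\widehat{\epsilon'_x})\to \Sigma\widehat{x}$ in $\cD(\cA/F^\bbN)$ therefore produces a triangle whose first arrow is invertible, forcing the essential image of (a) into the kernel of (b). The reverse inclusion becomes automatic once (b) is established: a compact object killed by the Verdier quotient must lie in the thick subcategory generated by the chosen cones.

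The main obstacle is verifying (b). My approach is to exploit the explicit colimit formula $(\cA/F^\bbZ)(x,y) = \colim_{p \geq 0}(\cA/F^\bbN)(x,F^p(y))$, which exhibits $\cA/F^\bbZ$ as the dg category obtained from $\cA/F^\bbN$ by formally inverting the system $\{\epsilon'_{F^p(y)}\}_{p \geq 0}$. Combining this with the standard dictionary between formal inversion of morphisms in a dg category and Verdier localization at the thick subcategory generated by their cones yields the desired quotient description. Alternatively, one can invoke the universal property of $\cA/F^\bbZ$ proved by Keller \cite[\S9]{Doc} to construct a functor in the opposite direction and verify directly that the two compositions are quasi-equivalences.
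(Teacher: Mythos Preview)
Your outline matches the paper's structure, and step (a) is fine. The real content, however, lives in (b) and the reverse inclusion in (c), and here your proposal elides exactly the computation the paper supplies.

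For (b) you invoke a ``standard dictionary'' between formally inverting morphisms and Verdier localization, and as a fallback Keller's universal property from \cite[\S9]{Doc}. Neither quite does the job as stated. The colimit formula for $(\cA/F^\bbZ)(x,y)$ is suggestive, but it does not by itself identify $\cA/F^\bbZ$ with the Drinfeld quotient of $\cA/F^\bbN$ by the cones of the $\epsilon'_x$; you would still need to compare the two dg categories, and that comparison is precisely the missing work. Keller's universal property concerns factorizations of dg functors out of $\cA$ (not $\cA/F^\bbN$), so it does not immediately produce an inverse at the level you need either. What the paper does instead is prove directly that the right adjoint $\iota^\ast:\cD(\cA/F^\bbZ)\to\cD(\cA/F^\bbN)$ is fully faithful, by identifying $\iota^\ast(\widehat{x})$ with the homotopy colimit $\mathrm{hocolim}_{p\geq 0}\,\widehat{F^p(x)}$ along the $\widehat{\epsilon'_{F^p(x)}}$ and checking the counit. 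This is the concrete step your sketch is missing, and it is what makes $\bfL\iota_\ast$ a Bousfield localization (hence a Verdier quotient) via \cite[\S A.1]{ilc}.

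The same homotopy colimit identification is then reused for the reverse inclusion in (c): the paper shows, via an auxiliary lemma, that the kernel $\cN$ of $\bfL\iota_\ast$ is generated by the objects $\mathrm{cone}(\widehat{x}\to\iota^\ast\bfL\iota_\ast(\widehat{x}))$, and the hocolim description exhibits each such cone as a filtered homotopy colimit of iterated cones of $\widehat{\epsilon'}$'s, hence in $\cD(\cA')$. Finally, passing from the big categories to the compact objects requires Neeman's theorem \cite{Neeman-1}. Your claim that the reverse inclusion is ``automatic once (b) is established'' presupposes that (b) already identifies the kernel with the thick closure of the cones; but that identification \emph{is} the content of the argument, not a consequence of it.
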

\begin{proof}
By definition of a short exact sequence of dg categories (see \cite[Thm.~4.11]{ICM-Keller}), we need to prove that the following sequence of triangulated categories
\begin{equation}\label{eq:sequence-1}
\cD_c(\cA') \hookrightarrow \cD_c(\cA/F^\bbN) \stackrel{\bfL \iota_\ast}{\too} \cD_c(\cA/F^\bbZ)
\end{equation}
is exact in the sense of Verdier \cite{Verdier}. Consider the following adjunction:
$$
\xymatrix{
\cD(\cA/F^\bbZ) \ar@<1ex>[d]^-{\iota^\ast} \\
\cD(\cA/F^\bbN) \ar@<1ex>[u]^-{\bfL \iota_\ast}\,. 
}
$$
We start by showing that $\iota^\ast$ is fully faithful. Since the functors $\iota^\ast$ and $\bfL \iota_\ast$ commute with infinite direct  sums, it suffices to show that the counit of the adjunction $\bfL \iota_\ast(\iota^\ast(\widehat{x})) \to \widehat{x}$ is an isomorphism for every $x \in \cA$. The object $\iota^\ast(\widehat{x}) \in \cD(\cA/F^\bbN)$ can be identified with the homotopy colimit of the following diagram:
\begin{equation}\label{eq:diagram-2}
\widehat{x} \too \cdots \too \widehat{F^p(x)} \stackrel{\widehat{\epsilon'_{F^p(x)}}}{\too} \widehat{F^{p+1}(x)} \too \cdots\,.
\end{equation}
Using the fact that $\bfL \iota_\ast(\widehat{F^p(x)})=\widehat{F^p(x)}$ and that $\bfL \iota_\ast(\widehat{\epsilon'_{F^p(x)}})=\widehat{\epsilon_{F^p(x)}}$ is an isomorphism, we hence conclude that the counit of the adjunction is an isomorphism. 

Let us write $\cN$ for the kernel of the functor $\bfL \iota_\ast$. Thanks to \cite[\S A.1 Lem.~(a)]{ilc}, we have the following exact sequence of triangulated categories:
\begin{equation}\label{eq:triang-cat-aux}
\cN \hookrightarrow \cD(\cA/F^\bbN) \stackrel{\bfL \iota_\ast}{\too} \cD(\cA/F^\bbZ)\,.
\end{equation}
We now show that $\cN=\cD(\cA')$. Since the functor $\bfL \iota_\ast$ commutes with infinite direct sums and $\bfL \iota_\ast(\widehat{\epsilon'_x})=\widehat{\epsilon_x}$ is an isomorphism, every object of $\cD(\cA')$ clearly belongs to $\cN$. In order to establish the converse inclusion $\cN \subseteq \cD(\cA')$, it suffices by Lemma \ref{lem:aux-1} below to show that the following objects 
\begin{eqnarray}\label{eq:objects1}
\mathrm{cone}(\widehat{x} \too \iota^\ast(\bfL \iota_\ast(\widehat{x}))) && x \in \cA
\end{eqnarray}
belong to $\cD(\cA')$. As mentioned above, $\iota^\ast(\bfL \iota_\ast(\widehat{x}))=\iota^\ast(\widehat{x})$ can be identified with the homotopy colimit of \eqref{eq:diagram-2}. Therefore, \eqref{eq:objects1} can be re-written as follows:
\begin{eqnarray}\label{eq:hocolim}
\mathrm{hocolim}_{p\geq 0} (\mathrm{cone}(\widehat{x} \too \widehat{F^p(x)})) && x \in \cA\,.
\end{eqnarray}
Using the octahedral axiom and the fact that the triangulated category $\cD(\cA')$ admits infinite direct sums, we hence conclude that the objects \eqref{eq:hocolim} belongs to $\cD(\cA')$. This implies that $\cN=\cD(\cA')$. Finally, by applying \cite[Thm.~2.1]{Neeman-1} to \eqref{eq:triang-cat-aux} (with $\cN$ replaced by $\cD(\cA')$), we obtain the searched sequence of triangulated categories \eqref{eq:sequence-1}. This achieves the proof.
\end{proof}
\begin{lemma}\label{lem:aux-1}
The triangulated category $\cN$ is generated by the following objects:
\begin{eqnarray*}
\mathrm{cone}(\widehat{x} \too \iota^\ast(\bfL \iota_\ast(\widehat{x}))) && x \in \cA\,.
\end{eqnarray*}
\end{lemma}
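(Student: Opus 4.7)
The plan is to exploit the Bousfield-localization nature of $\bfL \iota_\ast$, which is guaranteed by the full faithfulness of $\iota^\ast$ established in the preceding proposition, together with the fact that the Yoneda modules $\{\widehat{x}\}_{x \in \cA}$ form a set of compact generators of $\cD(\cA/F^\bbN)$.

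Let $\cN'$ denote the smallest localizing subcategory of $\cD(\cA/F^\bbN)$ containing the objects $C(\widehat{x}) := \mathrm{cone}(\widehat{x} \to \iota^\ast \bfL \iota_\ast(\widehat{x}))$ for $x \in \cA$. The inclusion $\cN' \subseteq \cN$ is immediate: applying $\bfL \iota_\ast$ to the unit map $\widehat{x} \to \iota^\ast \bfL \iota_\ast(\widehat{x})$ yields the isomorphism $\bfL \iota_\ast(\widehat{x}) \stackrel{\sim}{\to} \bfL \iota_\ast \iota^\ast \bfL \iota_\ast(\widehat{x})$ (because $\iota^\ast$ is fully faithful, so the counit $\bfL \iota_\ast \iota^\ast \Rightarrow \mathrm{Id}$ is invertible), whence $C(\widehat{x}) \in \cN$; the subcategory $\cN = \ker(\bfL \iota_\ast)$ being localizing, it contains $\cN'$.

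For the reverse inclusion, the key observation is that the endofunctor $C(M) := \mathrm{cone}(M \to \iota^\ast \bfL \iota_\ast(M))$ of $\cD(\cA/F^\bbN)$ is triangulated and commutes with arbitrary direct sums; both facts follow from the analogous properties of $\iota^\ast$ and $\bfL \iota_\ast$ recalled at the beginning of the proof of the preceding proposition. Consequently, the class $\{M \in \cD(\cA/F^\bbN) : C(M) \in \cN'\}$ is itself a localizing subcategory; by construction of $\cN'$ it contains every compact generator $\widehat{x}$, and therefore coincides with the whole of $\cD(\cA/F^\bbN)$. Specializing to an arbitrary $N \in \cN$ gives $\iota^\ast \bfL \iota_\ast(N) = 0$, so $C(N) \simeq \Sigma N$ belongs to $\cN'$, which forces $N \in \cN'$ as required.

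The only delicate point is interpreting ``generated'' in the correct sense: the subsequent application of the lemma (deducing $\cN \subseteq \cD(\cA')$ from the fact that each listed cone lies in $\cD(\cA')$) needs generation as a \emph{localizing} subcategory, which is exactly the flavor delivered by the compact generation of $\cD(\cA/F^\bbN)$ by the Yoneda modules $\widehat{x}$. No further estimate or explicit resolution is needed.
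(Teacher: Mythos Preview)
Your argument is correct and is precisely the standard Bousfield-localization proof that the paper invokes by citing Keller \cite[\S A.1 Lem.~(b)]{ilc}; you have simply unpacked that reference in full. One cosmetic remark: calling $C(-)$ a ``triangulated endofunctor'' is a little loose since cones are not functorial in a bare triangulated category, but what you actually use---that the class $\{M:C(M)\in\cN'\}$ is closed under triangles and coproducts---follows from the $3\times3$ lemma together with the coproduct-preservation of $\iota^\ast$ and $\bfL\iota_\ast$, so the argument goes through as written.
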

\begin{proof}
Since the triangulated categories $\cD(\cA/F^\bbN)$ and $\cD(\cA/F^\bbZ)$ admit infinite direct sums, $\iota^\ast$ commutes with infinite direct sums, and $\cD(\cA/F^\bbN)$ is generated by the objects $\widehat{x}, x \in \cA$, the proof is similar the one of \cite[\S A.1 Lem.~(b)]{ilc}.
\end{proof}
\subsection*{Step II: Square-zero extension}
Let $\cA$ be a dg category and $\mathrm{B}$ a $\cA\text{-}\cA$-bimodule.
\begin{definition}
The {\em square-zero extension $\cA \ltimes \mathrm{B}$ of $\cA$ by $\mathrm{B}$} is the dg category with the same objects as $\cA$ and complexes of $k$-modules $(\cA\ltimes \mathrm{B})(x,y) := \cA(x,y) \oplus \mathrm{B}(y,x)$. Given morphisms $(f,f') \in (\cA\ltimes \mathrm{B})(x,y)$ and $(g,g') \in (\cA\ltimes \mathrm{B})(y,z)$, the composition $(g,g')\circ (f,f')$ is defined as $(g \circ f, g'\cdot f + g \cdot f')$, where $\cdot$ stands for the $\cA\text{-}\cA$-bimodule structure of $\mathrm{B}$. Let us write $i: \cA \hookrightarrow \cA \ltimes \mathrm{B}$ for the inclusion dg functor.
\end{definition}
Let $F: \cA \to \cA$ be a dg functor which induces an equivalence of categories $\dgHo^0(F):\dgHo^0(\cA) \stackrel{\simeq}{\to} \dgHo^0(\cA)$. In this case we can consider the following $\cA\text{-}\cA$-bimodule:
\begin{eqnarray*}
\mathrm{B}_1: \cA\otimes^\bfL \cA^\op \too \cC_\dg(k) && (x,y) \mapsto \cA(y,F(x))[1]\,.
\end{eqnarray*} 
\begin{proposition}
We have the following Morita equivalence:
\begin{eqnarray}\label{eq:Morita}
\cA \ltimes \mathrm{B}_1 \too \cA' && x \mapsto \mathrm{cone}(\widehat{\epsilon'_x})\,.
\end{eqnarray}
\end{proposition}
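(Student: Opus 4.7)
The plan is to exhibit an explicit dg functor $G: \cA \ltimes \mathrm{B}_1 \to \cA'$ satisfying $G(x) = C_x := \mathrm{cone}(\widehat{\epsilon'_x})$ and to verify it is a Morita equivalence. Since $\cA'$ is by definition the full dg subcategory of $\cC_\dg(\cA/F^\bbN)$ on the objects $C_x$, essential surjectivity is immediate and the task reduces to checking that $G$ is \emph{quasi-fully-faithful}: for all $x, y \in \cA$, the induced map
\begin{equation*}
G_{xy}: (\cA \ltimes \mathrm{B}_1)(x,y) = \cA(x,y) \oplus \cA(x, F(y))[1] \too \underline{\Hom}_{\cC_\dg(\cA/F^\bbN)}(C_x, C_y)
\end{equation*}
is a quasi-isomorphism.

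I will define $G_{xy}$ using the decomposition $C_z = \widehat{F(z)} \oplus \widehat{z}[1]$ of the cone as a graded $(\cA/F^\bbN)$-module. An element $f \in \cA(x,y)$ is sent to the diagonal morphism of cones functorially induced by $f$, which rests on the identity $\epsilon'_y \circ f = F(f) \circ \epsilon'_x$ in $\cA/F^\bbN$ (immediate from the explicit composition formula of Section \ref{sec:orbit}). An element $h \in \cA(x, F(y))[1]$ is sent to the off-diagonal morphism $\widehat{x}[1] \to \widehat{F(y)}$ obtained from $h$, extended by zero on the other matrix entries. Verifying that this assignment respects differentials and composition is a direct computation with the cone differentials. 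To compute the target $\underline{\Hom}(C_x, C_y)$, I apply $\underline{\Hom}$ to both triangles $\widehat{z} \to \widehat{F(z)} \to C_z$ and use dg Yoneda to realize it as the totalization of a $2 \times 2$ double complex whose four entries are the hom complexes $(\cA/F^\bbN)(z,w) = \bigoplus_{n \geq 0} \cA(F^n(z), w)$ for $(z,w) \in \{x, F(x)\} \times \{y, F(y)\}$.

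The key observation is that the horizontal and vertical differentials of this double complex, induced by precomposition with $\epsilon'_x$ and postcomposition with $\epsilon'_y$, are precisely the index-shift inclusions of $\bigoplus_{n \geq 1} \cA(F^n(\cdot), \cdot)$ into $\bigoplus_{n \geq 0} \cA(F^n(\cdot), \cdot)$; this identification uses the quasi-fully-faithfulness of $F$ on hom complexes, itself a consequence of $\dgHo^0(F)$ being an equivalence. Each such inclusion has cokernel the $n = 0$ summand $\cA(\cdot, \cdot)$, so its mapping cone is quasi-isomorphic to $\cA(\cdot, \cdot)$. This reduces $\underline{\Hom}(C_x, C_y)$ up to quasi-isomorphism to the cone of an induced map $\cA(x, y) \to \cA(x, F(y))$, and this induced map vanishes because the image of the $n=0$ summand under $\widehat{\epsilon'_y}$ lands in the $n=1$ summand, hence in the kernel of the projection onto the cokernel. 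Consequently the iterated cone splits into a direct sum, yielding a quasi-isomorphism to $\cA(x, y) \oplus \cA(x, F(y))[1]$ matching $(\cA \ltimes \mathrm{B}_1)(x, y)$, and one checks this is realized precisely by $G_{xy}$. The main obstacle lies in the careful bookkeeping of shifts and Koszul signs throughout the double complex computation, needed to confirm that the abstract splitting genuinely coincides with the explicit functor $G_{xy}$ defined above.
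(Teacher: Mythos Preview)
Your overall strategy coincides with the paper's: both identify $\cA'(C_x,C_y)$ with the totalization of the $2\times 2$ double complex whose entries are the four $(\cA/F^\bbN)$-hom complexes, and then show this totalization is quasi-isomorphic to $\cA(x,y)\oplus\cA(x,F(y))[1]$. The paper packages this via auxiliary $\cC^b(\cC(k))$-enriched categories and a direct horizontal-cohomology computation (Lemma~\ref{lem:computation}); you use an iterated-cone argument. The content is the same.

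There is, however, a genuine error in your key observation. You assert that \emph{both} differentials are index-shift inclusions and justify this via ``quasi-fully-faithfulness of $F$ on hom complexes, itself a consequence of $\dgHo^0(F)$ being an equivalence.'' Both halves are false. The hypothesis that $\dgHo^0(F)$ is an equivalence says nothing about $H^n$ of hom complexes for $n\neq 0$, so $F$ need not be quasi-fully-faithful. And postcomposition with $\epsilon'_y$ is \emph{not} a pure index-shift inclusion: a direct computation with the composition law of $\cA/F^\bbN$ shows it sends $h\in\cA(F^n(z),y)$ to $F(h)\in\cA(F^{n+1}(z),F(y))$, i.e.\ it is the map induced by $F$ together with a degree shift (this is exactly the paper's description of \eqref{eq:morphisms2}). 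Fortunately your argument survives: you only need that \emph{precomposition} with $\epsilon'_x$ is an honest inclusion (it is) to identify those two cones with their cokernels $\cA(x,y)$ and $\cA(x,F(y))$; and your claim that the induced map between these cokernels vanishes uses only that postcomposition raises the $\bbN$-grading by one, which holds regardless of whether $F$ is injective on hom complexes. So the repair is simply to drop the incorrect claims about postcomposition and about quasi-fully-faithfulness --- neither is needed, and indeed the paper's proof of this proposition nowhere uses the hypothesis on $\dgHo^0(F)$.
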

\begin{proof}
Let $\cC^b(\cC(k))$ be the symmetric monoidal category of bounded cochain complexes in $\cC(k)$, and $\dgdgcat(k)$ the category of small $\cC^b(\cC(k))$-enriched categories. The symmetric monoidal totalization functor $\mathrm{Tot}: \cC^b(\cC(k)) \to \cC(k)$ gives rise to a well-defined functor $\mathrm{Tot}: \dgdgcat(k) \to \dgcat(k)$. Note that since we are using bounded cochain complexes in $\cC(k)$, there is no difference between the totalization functor $\mathrm{Tot}^\oplus$ and the totalization functor $\mathrm{Tot}^{\prod}$.

We start by introducing two auxiliar categories $\overline{\cA\ltimes \mathrm{B}_1}, \overline{\cA'} \in \dgdgcat(k)$. The first one has the same objects as $\cA$ and cochain complexes in $\cC(k)$ given by
$$(\overline{\cA\ltimes \mathrm{B}_1})(x,y):= \cdots \too0 \too \cA(x,y) \stackrel{0}{\too} \cA(x,F(y)) \too 0\too \cdots\,,$$
where $\cA(x,y)$ is of degree zero. The composition law is induced by the composition law of $\cA$ and by the following morphisms:
\begin{eqnarray*}
\cA(y,F(z)) \otimes \cA(x,y) \too \cA(x,F(z)) && (g',f) \mapsto g' \circ f \\
\cA(y,z) \otimes \cA(x, F(y)) \too \cA(x, F(z)) && (g, f') \mapsto F(g) \circ f\,.
\end{eqnarray*}
Note that $\mathrm{Tot}(\overline{\cA\ltimes \mathrm{B}_1})$ identifies with $\cA\ltimes \mathrm{B}_1$. In order to define the second auxiliar category $\overline{\cA'}$, we need to introduce some notations. Given objects $x, y \in \cA$, consider the following cochain complexes in the dg category $\cA/F^\bbN$
\begin{eqnarray*}
\cdots \too 0 \too x \stackrel{\epsilon'_x}{\too} F(x) \too 0\too \cdots && \cdots\too 0 \too y \stackrel{\epsilon'_y}{\too} F(y) \too 0 \too \cdots,
\end{eqnarray*}
where $x$ and $y$ are of degree zero. The associated cochain complex in $\cC(k)$ of morphisms from the left-hand side to the right-hand side is given by 
\begin{equation}\label{eq:complexes}
\cA/F^\bbN(F(x),y) \stackrel{d_{-1}}{\too} \cA/F^\bbN(x,y) \oplus \cA/F^\bbN(F(x),F(y)) \stackrel{d_0}{\too} \cA/F^\bbN(x,F(y))\,,
\end{equation}
where $d_{-1}(h):=(h \circ \epsilon'_x, \epsilon'_y \circ h)$ and $d_0(f,g):=\epsilon'_y \circ f - g \circ \epsilon'_x$. Under these notations, the auxiliar category $\overline{\cA'} \in \dgdgcat(k)$ is defined as having the same objects as $\cA$ and bounced cochain complexes in $\cC(k)$ given by $\overline{\cA'}(x,y):=\eqref{eq:complexes}$. The composition law is induced by the composition law of $\cA$. Via the Yoneda dg functor $\cA/F^\bbN \to \cC_\dg(\cA/F^\bbN)$, the totalization of \eqref{eq:complexes} can be identified with the cochain complex of $k$-modules in $\cC_\dg(\cA/F^\bbN)$ from $\mathrm{cone}(\widehat{\epsilon'_x})$ to $\mathrm{cone}(\widehat{\epsilon'_y})$; see \cite[\S2.4]{Drinfeld}. Consequently, we obtain the following identification of dg categories:
\begin{eqnarray}\label{eq:identification}
\mathrm{Tot}(\overline{\cA'}) \stackrel{\simeq}{\too} \cA' && x \mapsto \mathrm{cone}(\widehat{\epsilon'_x})\,.
\end{eqnarray}
Let us now relate the auxiliar categories $\overline{\cA\ltimes \mathrm{B}_1}$ and $\overline{\cA'}$. Given objects $x, y \in \cA$, consider the following morphism between bounded cochain complexes in $\cC(k)$:
\begin{equation}\label{eq:morphisms}
\xymatrix@C=1.1em@R=2.5em{
0 \ar[d] \ar[r] & \cA(x,y) \ar[r]^-0  \ar[d]^-{(\pi', \pi' \circ F)} & \cA(x,F(y)) \ar[d]^-{\pi'}\\
\cA/F^\bbN(F(x),y) \ar[r]^-{d_{-1}} & \cA/F^\bbN(x,y) \oplus \cA/F^\bbN(F(x),F(y)) \ar[r]^-{d_0} & \cA/F^\bbN(x,F(y));
}
\end{equation}
the commutativity of the right-hand side square follows from the that $\epsilon':\pi' \Rightarrow \pi' \circ F$ is a morphism of dg functors. Making use of the above morphism, we introduce the following $\cC^b(\cC(k))$-enriched functor:
\begin{eqnarray*}
\overline{\cA\ltimes \mathrm{B}_1} \too \overline{\cA'} & x \mapsto x & (\overline{\cA\ltimes \mathrm{B}_1})(x,y) \stackrel{\eqref{eq:morphisms}}{\too} \overline{\cA'}(x,y)\,.
\end{eqnarray*}
Thanks to Lemma \ref{lem:computation} below, the morphism \eqref{eq:morphisms} induce an isomorphism in horizontal cohomology. Consequently, its totalization is a quasi-isomorphism. This implies that the induced dg functor
\begin{eqnarray*}
\cA \ltimes \mathrm{B}_1 = \mathrm{Tot}(\overline{\cA \ltimes \mathrm{B}_1}) \too \mathrm{Tot}(\overline{\cA'})&& x \mapsto x
\end{eqnarray*}
is a Morita equivalence. By composing it with the above identification \eqref{eq:identification}, we hence obtain the searched Morita equivalence \eqref{eq:Morita}. This achieves the proof. 
\end{proof}
\begin{lemma}\label{lem:computation}
The above morphism \eqref{eq:morphisms}, between bounded cochain complexes in $\cC(k)$, induces an isomorphism in horizontal cohomology.
\end{lemma}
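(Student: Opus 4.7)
I would compute the horizontal cohomology of the bottom row of \eqref{eq:morphisms} directly, by unpacking the direct-sum decomposition \eqref{eq:decomp} and the composition law of $\cA/F^\bbN$. Since $\epsilon'_x\in\cA/F^\bbN(x,F(x))$ is the element whose $n=1$ component is $\Id$ and whose other components vanish, the composition rule $(g\circ f)_m=\sum_n g_n\circ F^n(f_{m-n})$ yields, for every $m\geq 1$, the formulas
$$(h\circ\epsilon'_x)_m=h_{m-1},\quad (\epsilon'_y\circ h)_m=F(h_{m-1}),\quad d_0(f,g)_m=F(f_{m-1})-g_{m-1},$$
while all three quantities vanish for $m=0$.

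With these formulas in hand, the cohomology falls out with essentially no further work. First, $d_{-1}$ is injective, since $d_{-1}(h)=0$ forces $h_{m-1}=0$ for every $m\geq 1$; hence the horizontal cohomology at degree $-1$ vanishes, matching the top row. Next, the kernel of $d_0$ is the ``graph'' subspace $\{(f,g):g_n=F(f_n)\text{ for all }n\geq 0\}$, parametrized by $f\in\cA/F^\bbN(x,y)$, and the image of $d_{-1}$ cuts out those pairs in the kernel with $f_0=0$; thus the degree-$0$ cohomology is canonically identified with $\cA(x,y)$ via $(f,g)\mapsto f_0$. Finally, given $\alpha\in\cA/F^\bbN(x,F(y))$, the element $s(\alpha)\in\cA/F^\bbN(F(x),F(y))$ with $s(\alpha)_m=-\alpha_{m+1}$ satisfies $d_0(0,s(\alpha))=\alpha-\pi'(\alpha_0)$, so the degree-$1$ cohomology is identified with $\cA(x,F(y))$ via $\alpha\mapsto\alpha_0$.

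It remains to verify that under these identifications the vertical map in \eqref{eq:morphisms} becomes the identity: an $f\in\cA(x,y)$ is sent to $(\pi'(f),\pi'(F(f)))$, whose zeroth component is $f$; a $g\in\cA(x,F(y))$ is sent to $\pi'(g)$, whose $n=0$ component is $g$. Hence the vertical morphism is an isomorphism in horizontal cohomology in all three degrees. The only real obstacle is careful index bookkeeping---tracking the shift introduced by $F$ and identifying the direct-sum slot in which each morphism lives---but no further homological machinery is needed.
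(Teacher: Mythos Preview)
Your proposal is correct and follows essentially the same route as the paper: both arguments unpack the direct-sum decomposition \eqref{eq:decomp}, identify $h\mapsto h\circ\epsilon'_x$ and $h\mapsto\epsilon'_y\circ h$ as the shift-inclusion and the componentwise application of $F$, and read off the cohomology in each degree from these descriptions. Your presentation is slightly more explicit (you write down the section $s(\alpha)$ in degree~$1$ and check that the vertical map becomes the identity on the nose), whereas the paper phrases the degree-$0$ step as an identification of $\ker d_0$ with all of $(\cA/F^\bbN)(x,y)$ before quotienting, but the underlying computation is identical.
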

\begin{proof}
Given objects $x, y, z \in \cA/F^\bbN$, consider the following homomorpisms:
\begin{eqnarray}\label{eq:morphisms1}
(\cA/F^\bbN)(F(x),z) \too (\cA/F^\bbN)(x,z) && h \mapsto h \circ \epsilon'_x 
\end{eqnarray}
\begin{eqnarray}\label{eq:morphisms2}
(\cA/F^\bbN)(z,y) \too (\cA/F^\bbN)(z,F(y)) && h \mapsto \epsilon'_y \circ h \,.
\end{eqnarray}
Note that \eqref{eq:morphisms1}-\eqref{eq:morphisms2} correspond to the homomorphisms:
\begin{eqnarray*}
\bigoplus_{n\geq 0} \cA(F^{n+1}(x),z)\hookrightarrow\bigoplus_{n \geq 0}\cA(F^n(x),z) && \bigoplus_{n\geq 0} \cA(F^n(z),y) \stackrel{F}{\to} \bigoplus_{n \geq 0}\cA(F^n(z),F(y))\,.
\end{eqnarray*}
By taking $z=y$ in \eqref{eq:morphisms1}, we conclude that $d_{-1}$ is injective and consequently that the morphism \eqref{eq:morphisms} induces an isomorphism in $(-1)^{\mathrm{th}}$-cohomology. The above descriptions of \eqref{eq:morphisms1}-\eqref{eq:morphisms2} allow us also to conclude that the image of $d_0$ is given by $\bigoplus_{n \geq 1}\cA(F^n(x),F(y))$. Consequently, the morphism \eqref{eq:morphisms} also induces an isomorphism in $1^{\mathrm{th}}$-cohomology. In what concerns $0^{\mathrm{th}}$-cohomology, note that by taking $z=F(y)$, resp. $z=x$, in \eqref{eq:morphisms1}, resp. in \eqref{eq:morphisms2}, we conclude that 
$$ (\pi',\pi' \circ F): (\cA/F^\bbN)(x,y)\too (\cA/F^\bbN)(x,y)\oplus (\cA/F^\bbN)(F(x),F(y))$$
induces an isomorphism between $(\cA/F^\bbN)(x,y)$ and the kernel of $d_0$. Moreover, under such isomorphism, $d_{-1}$ identifies with the inclusion of $\bigoplus_{n \geq 0} \cA(F^{n+1}(x),y)$ into $\bigoplus_{n \geq 0} \cA(F^n(x),y)$. This implies that the morphism \eqref{eq:morphisms} also induces an isomorphism in $0^{\mathrm{th}}$-cohomology, and hence concludes the proof.
\end{proof}
\subsection*{Step III: Noncommutative motives}
Let $E:\dgcat(k) \to \cT$ be an $\bbA^1$-homotopy invariant. Thanks to the defining conditions (i)-(ii), the above short exact sequence of dg categories \eqref{eq:ses} gives rise to the following distinguished triangle:
\begin{equation}\label{eq:triangle}
E(\cA') \stackrel{E(\mathrm{B}')}{\too} E(\cA/F^\bbN) \stackrel{E(\iota)}{\too} E(\cA/F^\bbZ) \stackrel{\partial}{\too} \Sigma(E(\cA'))\,.
\end{equation}
\begin{proposition}\label{prop:diagram}
We have the following commutative diagram:
\begin{equation}\label{eq:com-square}
\xymatrix@C=3em@R=2em{
E(\cA') \ar[rr]^-{E(\mathrm{B}')} && E(\cA/F^\bbN) \\
E(\cA\ltimes \mathrm{B}_1) \ar[u]^-{E(\eqref{eq:Morita})}_-\sim && \\
E(\cA) \ar[u]^-{E(i)} \ar[rr]_-{E(F) -\Id} && E(\cA) \ar[uu]_-{E(\pi')}\,.
}
\end{equation}
\end{proposition}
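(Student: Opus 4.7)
My plan is to unwind each composition in the diagram as a bimodule in $\rep(\cA,\cA/F^\bbN)$ using the description of $\Hmo(k)$ recalled in \S\ref{sec:NCmotives}, and then match the two sides. The key reduction is that $E$, since it inverts Morita equivalences, factors through $\Hmo(k)$, and composition in $\Hmo(k)$ is given by the derived tensor product of bimodules. In particular, composition of dg functors (or more generally of bimodules in $\rep$) is preserved by $E$.

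The upper triangle of the diagram commutes tautologically from these remarks: $E(\mathrm{B}')\circ E(\eqref{eq:Morita})$ equals $E$ applied to the composite bimodule, and $E(\eqref{eq:Morita})$ is an isomorphism. For the lower square I would compute the composite bimodule representing $E(\mathrm{B}') \circ E(\eqref{eq:Morita}) \circ E(i)$: chasing through the definitions this is the object
$$\mathrm{C} \in \rep(\cA,\cA/F^\bbN),\qquad (x,y) \mapsto \cC_\dg(\cA/F^\bbN)\bigl(\widehat{y},\,\mathrm{cone}(\widehat{\epsilon'_x})\bigr),$$
i.e.\ the bimodule represented by $x \mapsto \mathrm{cone}(\widehat{\epsilon'_x}) \in \cD_c(\cA/F^\bbN)$.

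Now the defining triangle of the cone,
$$\widehat{x} \stackrel{\widehat{\epsilon'_x}}{\too} \widehat{F(x)} \too \mathrm{cone}(\widehat{\epsilon'_x}) \too \Sigma\widehat{x},$$
is functorial in $x \in \cA$ and therefore assembles into a distinguished triangle in $\rep(\cA,\cA/F^\bbN)$:
$${}_{\pi'}\mathrm{B} \too {}_{\pi'\circ F}\mathrm{B} \too \mathrm{C} \too \Sigma({}_{\pi'}\mathrm{B}).$$
Combined with the identifications $E({}_{\pi'}\mathrm{B})=E(\pi')$ and $E({}_{\pi'\circ F}\mathrm{B})=E(\pi')\circ E(F)$, a linear relation $E(\mathrm{C}) = E({}_{\pi'\circ F}\mathrm{B}) - E({}_{\pi'}\mathrm{B})$ would yield
$$E(\mathrm{C}) = E(\pi')\circ E(F) - E(\pi') = E(\pi')\circ (E(F)-\Id),$$
which is exactly the commutativity of the lower portion of the diagram.

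The main obstacle is justifying this last translation: passing from a distinguished triangle in $\rep(\cA,\cB)$ to a linear relation in $\Hom_\cT(E(\cA),E(\cB))$. This is a standard fact about localizing invariants, typically proved by realizing the triangle as arising from a short exact sequence of dg categories via an upper-triangular matrix construction and then invoking condition (ii). It is closely related to (and presumably packaged as a variant of) Lemma \ref{lem:universal} cited earlier in the paper. Everything else in the argument is essentially bookkeeping in $\Hmo(k)$.
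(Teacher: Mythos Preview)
Your proposal is correct and follows essentially the same route as the paper: compute the composite bimodule as $\mathrm{cone}({}_{\epsilon'}\mathrm{B}:{}_{\pi'}\mathrm{B}\Rightarrow{}_{\pi'\circ F}\mathrm{B})$ in $\rep(\cA,\cA/F^\bbN)$ and then invoke additivity. The only difference is organizational: the paper uses Lemma~\ref{lem:universal} up front to reduce to $E=U$, so that morphisms land in $K_0\rep(\cA,\cA/F^\bbN)$ and the relation $[\mathrm{cone}]=[{}_{\pi'\circ F}\mathrm{B}]-[{}_{\pi'}\mathrm{B}]$ is immediate by definition of the Grothendieck group---this is precisely the ``obstacle'' you flag, and factoring through $\Hmo_0(k)$ (not merely $\Hmo(k)$) is what resolves it.
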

\begin{proof}
Thanks to Lemma \ref{lem:universal} below, it suffices to prove Proposition \ref{prop:diagram} the particular case where $E=U$. Recall from \S\ref{sec:orbit} that we have a morphism of dg functors $\epsilon': \pi' \Rightarrow \pi' \circ F$. Let us denote by ${}_{\epsilon'}\mathrm{B}: {}_{\pi'}\mathrm{B} \Rightarrow {}_{\pi' \circ F}\mathrm{B}$ the induced morphism of $\cA\text{-}(\cA/F^\bbN)$-bimodules. By definition of the Morita equivalence \eqref{eq:Morita} and of the $\cA'\text{-}(\cA/F^\bbN)$-bimodule $\mathrm{B}'$, we observe that the composition $U(\mathrm{B}') \circ U(\eqref{eq:Morita}) \circ U(i)$ identifies with the Grothendieck class of the following $\cA\text{-}(\cA/F^\bbN)$-bimodule:
$$ \mathrm{cone}({}_{\epsilon'}\mathrm{B}: {}_{\pi'}\mathrm{B} \Rightarrow {}_{\pi' \circ F}\mathrm{B}) \in \rep(\cA,\cA/F^\bbN)\,.$$
Since the Grothendieck of this class is given by $[{}_{\pi' \circ F}\mathrm{B}]-[{}_{\pi'}\mathrm{B}]$, the proof follows then from the equalities $[{}_{\pi'}\mathrm{B}]=:U(\pi')$ and $[{}_{\pi' \circ F}\mathrm{B}] =:U(\pi' \circ F)$.
\end{proof}
\begin{lemma}\label{lem:universal}
Given an $\bbA^1$-homotopy invariant $E: \dgcat(k) \to \cT$, there is an (unique) additive functor $\overline{E}: \Hmo_0(k) \to \cT$ such that $\overline{E}\circ U\simeq E$. 
\end{lemma}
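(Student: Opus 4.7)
The plan is to factor $E$ through $\Hmo_0(k)$ in two stages, corresponding to the two-step description of $U$ as the composition of $\dgcat(k) \to \Hmo(k)$ (from \eqref{eq:functor1}) with $\Hmo(k) \to \Hmo_0(k)$ (from \eqref{eq:nat2}). Condition (iii) will play no role; only conditions (i) and (ii) enter. Note that condition (iii) is not needed here, which reflects the fact that every localizing invariant (not merely every $\bbA^1$-homotopy invariant) factors through noncommutative motives.

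\textbf{Stage 1.} By condition (i), the functor $E$ inverts Morita equivalences. Since $\Hmo(k)$ is, by definition, the localization of $\dgcat(k)$ at the Morita equivalences (via the Quillen model structure of \cite[Thm.~5.3]{IMRN} invoked in \S\ref{sec:dg}), the universal property of localization produces a unique functor $E': \Hmo(k) \to \cT$ with $E' \circ (\dgcat(k) \to \Hmo(k)) \simeq E$.

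\textbf{Stage 2.} To extend $E'$ to $\Hmo_0(k)$ we must define, for every pair $\cA,\cB$, a group homomorphism
$$K_0\,\rep(\cA,\cB) \too \Hom_\cT(E'(\cA), E'(\cB))$$
that is compatible with the composition law \eqref{eq:bimodules11} and sends the class of ${}_F \mathrm{B}$ to $E(F)$. The key device is the \emph{upper triangular dg category} $T(\cA,\cB;\mathrm{B})$ attached to any bimodule $\mathrm{B}\in\rep(\cA,\cB)$: its objects are those of $\cA\sqcup\cB$, its hom-complexes are given by $\cA$, $\cB$, and $\mathrm{B}$ in the obvious block-upper-triangular pattern, and it fits in a split short exact sequence of dg categories
$$0 \too \cB \too T(\cA,\cB;\mathrm{B}) \too \cA \too 0\,,$$
split by the evident inclusion $\cA \hookrightarrow T(\cA,\cB;\mathrm{B})$. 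Applying condition (ii), $E'$ sends this to a distinguished triangle that is split by $E'(\cA) \to E'(T(\cA,\cB;\mathrm{B}))$, yielding an isomorphism $E'(T(\cA,\cB;\mathrm{B})) \simeq E'(\cA) \oplus E'(\cB)$. The splitting depends on $\mathrm{B}$: comparing it with the splitting for the zero bimodule defines a morphism $\overline{E}(\mathrm{B}): E'(\cA) \to E'(\cB)$ in $\cT$.

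\textbf{Main tasks to verify.} Three checks remain, and together they form the only real work: (a) the assignment $\mathrm{B}\mapsto \overline{E}(\mathrm{B})$ is additive on distinguished triangles of $\rep(\cA,\cB)$, hence descends to $K_0\rep(\cA,\cB)$; (b) it is compatible with the composition \eqref{eq:bimodules11}, so it defines a functor on $\Hmo_0(k)$; and (c) on bimodules of the form ${}_F \mathrm{B}$ arising from dg functors it recovers $E(F)$. For (a) one iterates the triangular construction: given a triangle $\mathrm{B}'\to \mathrm{B}\to \mathrm{B}''$ in $\rep(\cA,\cB)$, build a two-step triangular dg category whose associated split short exact sequences, together with condition (ii), force $\overline{E}(\mathrm{B})=\overline{E}(\mathrm{B}')+\overline{E}(\mathrm{B}'')$. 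For (b) one compares the triangular dg categories $T(\cA,\cB;\mathrm{B})$ and $T(\cB,\cC;\mathrm{B}')$ with a suitable three-block triangular dg category encoding $\mathrm{B}\otimes^\bfL_\cB \mathrm{B}'$. Point (c) is immediate from the definition, since when $\mathrm{B}={}_F \mathrm{B}$ the triangular dg category is Morita-equivalent to the mapping cylinder of $F$, and the splitting coming from $F$ recovers $E(F)$ on the summands.

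\textbf{Expected obstacle.} The genuinely delicate step is (b): verifying that the construction is compatible with composition of bimodules in $\Hmo(k)$. This is a well-documented fact from the foundational theory of noncommutative motives (see the survey \cite{survey} and the references therein, where $\Hmo_0(k)$ is shown to corepresent the functor of additive invariants). The proposal is therefore to give the construction of $\overline{E}$ as above and to invoke this universal property for the functoriality and uniqueness statements, rather than carry out the verification from scratch.
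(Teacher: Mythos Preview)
Your proposal is correct, but it takes a roundabout path compared to the paper. The paper's proof is a two-line citation: it recalls that $U:\dgcat(k)\to\Hmo_0(k)$ is the \emph{universal additive invariant} (from \cite[Thms.~5.3 and 6.3]{IMRN}), meaning precomposition with $U$ induces an equivalence between additive functors $\Hmo_0(k)\to\cT$ and additive invariants $\dgcat(k)\to\cT$; it then observes that conditions (i)--(ii) already make $E$ an additive invariant (a split short exact sequence is sent to a split triangle, hence to a direct sum). The factorization $\overline{E}$ and its uniqueness then drop out immediately.

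What you do instead is partially \emph{reprove} that universal property: your Stage~2 upper-triangular construction is precisely the mechanism underlying \cite[Thm.~6.3]{IMRN}. This is fine and even instructive, but your proposal becomes somewhat circular at the end, where you concede that the delicate compatibility with composition (your point~(b)) should be handled by invoking the universal property from \cite{survey,IMRN}. Once you are willing to cite that, Stages~1--2 are redundant: the citation already delivers $\overline{E}$, its additivity, its functoriality, and its uniqueness in one stroke. So either carry out (a)--(c) in full (reproducing the argument of \cite{IMRN}), or cite the universal property from the outset as the paper does; the hybrid adds length without adding content.
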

\begin{proof}
Recall from \cite{IMRN} that a functor $E:\dgcat(k) \to \mathrm{D}$, with values in an additive category, is called an {\em additive invariant} if it inverts the Morita equivalences and sends split short exact sequence of dg categories to direct sums. As proved in \cite[Thms.~5.3 and 6.3]{IMRN}, the functor $U:\dgcat(k) \to \Hmo_0(k)$ is the {\em universal additive invariant}, \ie given any additive category $\mathrm{D}$ there is an equivalence of categories
$$U^\ast: \mathrm{Fun}_{\mathrm{additive}}(\Hmo_0(k),\mathrm{D}) \stackrel{\simeq}{\too} \mathrm{Fun}_{\mathrm{add}}(\dgcat(k), \mathrm{D})\,,$$
where the left-hand side denotes the category of additive functors and the right-hand side the category of additive invariants. The proof follows now from the fact that every $\bbA^1$-homotopy invariant is also an additive invariant.
\end{proof}
\subsection*{Step IV: Gradings}
A dg category $\cB$ is called {\em $\bbN_0$-graded} if the complexes of $k$-modules $\cB(x,y)$ admit a direct sum decomposition $\bigoplus_{n \geq 0} \cB(x,y)_n$ which is preserved by the composition law. Let us denote by $\cB_0$ the associated dg category with the same objects as $\cB$ and $\cB_0(x,y):=\cB(x,y)_0$. Note that we have an inclusion dg functor $i_0:\cB_0 \hookrightarrow \cB$ and a projection dg functor $p: \cB \to \cB_0$ such that $p \circ i_0 =\Id$.
\begin{example}
\begin{itemize}
\item[(i)] The dg category $\cA/F^\bbN$, equipped with the direct sum decomposition \eqref{eq:decomp}, is $\bbN_0$-graded. In this case, $(\cA/F^\bbN)_0=\cA$ and $i_0=\pi'$.
\item[(ii)] The dg category $\cA\ltimes \mathrm{B}_1$, equipped with the direct sum decomposition given by $(\cA\ltimes \mathrm{B}_1)(x,y)_0:= \cA(x,y)$ and $(\cA\ltimes \mathrm{B}_1)(x,y)_1:= \mathrm{B}_1(y,x)$, is also $\bbN_0$-graded. In this case, $(\cA\ltimes \mathrm{B}_1)_0=\cA$ and $i_0=i$.
\end{itemize}
\end{example}
\begin{proposition}\label{prop:grading}
Every $\bbA^1$-homotopy invariant $E:\dgcat(k) \to \cT$ inverts the dg functors $i_0: \cB_0 \hookrightarrow \cB$. 
\end{proposition}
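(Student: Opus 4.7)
My strategy is to construct a ``scaling'' dg functor $s\colon \cB \to \cB[t]$ that realizes both $\Id_\cB$ and $i_0\circ p$ as two specializations of the same morphism in $\dgcat(k)$, and then exploit axiom (iii) (which makes the inclusion $\cB\hookrightarrow \cB[t]$ invertible after applying $E$) to force these two endomorphisms to become equal under $E$. Combined with the trivial identity $p\circ i_0=\Id_{\cB_0}$, this will exhibit $E(p)$ as a two-sided inverse to $E(i_0)$.

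Concretely, define $s\colon \cB \to \cB[t]$ to be the identity on objects and to send a homogeneous morphism $f\in \cB(x,y)_n$ to $f\otimes t^n \in \cB(x,y)\otimes k[t]$. The hypothesis that the $\bbN_0$-grading is preserved by the composition law gives $s(g\circ f)=s(g)\circ s(f)$, and the fact that the decomposition $\cB(x,y)=\bigoplus_{n\geq 0}\cB(x,y)_n$ is a decomposition of complexes (so that the differential preserves the $\bbN_0$-degree) ensures $s$ is compatible with the differentials. Hence $s$ is a bona fide dg functor. Let $\mathrm{ev}_0,\mathrm{ev}_1\colon \cB[t]\to \cB$ be the dg functors induced by the $k$-algebra homomorphisms $k[t]\to k$ sending $t\mapsto 0$ and $t\mapsto 1$, respectively, and let $j\colon \cB\hookrightarrow \cB[t]$ be the canonical inclusion. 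A direct computation yields the following four identities:
\begin{equation*}
\mathrm{ev}_1\circ s = \Id_\cB\,,\qquad \mathrm{ev}_0\circ s = i_0\circ p\,,\qquad \mathrm{ev}_0\circ j = \mathrm{ev}_1\circ j = \Id_\cB\,.
\end{equation*}

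By axiom (iii), $E(j)$ is an isomorphism, and the last two identities imply that $E(\mathrm{ev}_0)=E(\mathrm{ev}_1)=E(j)^{-1}$. Applying $E$ to the first two identities then forces $E(i_0)\circ E(p)=E(i_0\circ p)=E(\Id_\cB)=\Id$. Combined with $E(p)\circ E(i_0)=E(p\circ i_0)=E(\Id_{\cB_0})=\Id$, we conclude that $E(i_0)$ is an isomorphism with two-sided inverse $E(p)$.

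The only potentially delicate point is the well-definedness of $s$ as a dg functor; but this is essentially the observation that an $\bbN_0$-grading on the morphism complexes of $\cB$ compatible with composition is exactly the data of a $k[t]$-linear structure on $\cB$ (with $t$ acting on $\cB(x,y)_n$ as multiplication by $t$, shifting degree by one), after which axiom (iii) does all the work.
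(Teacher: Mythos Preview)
Your proof is correct and essentially identical to the paper's: the paper defines the same scaling dg functor (called $H$ there, your $s$) sending $f\in\cB(x,y)_n$ to $f\otimes t^n$, observes that $\mathrm{ev}_0\circ H=i_0\circ p$ and $\mathrm{ev}_1\circ H=\Id$, and uses axiom (iii) to invert the inclusion (your $j$, the paper's $\mathrm{inc}$) so that $E(\mathrm{ev}_0)=E(\mathrm{ev}_1)$. Your exposition is slightly more explicit about why $s$ is a well-defined dg functor, but the argument is the same.
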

\begin{proof}
Since $p\circ i_0=\Id$, it remains to show that $E(i_0\circ p)=\Id$. Note that we have canonical dg functors 
$\mathrm{inc}: \cB \to \cB[t]$ and $\mathrm{ev}_0, \mathrm{ev}_1: \cB[t] \to \cB$ verifying the equalities $\mathrm{ev}_0 \circ \mathrm{inc} = \mathrm{ev}_1 \circ \mathrm{inc} = \Id$. Consider the following commutative diagram
\begin{equation}\label{eq:triangle-diagram}
\xymatrix{
&& \cB \\
\cB \ar@/_1.0pc/@{=}[drr] \ar[rr]^-H  \ar@/^1.0pc/[urr]^-{i_0 \circ p} && \cB[t] \ar[u]_-{\mathrm{ev}_0} \ar[d]^-{\mathrm{ev}_1} \\
&& \cB\,,
}
\end{equation}
where $H$ is the dg functor defined as
\begin{eqnarray*}
\cB \too \cB[t] &x \mapsto x & \cB(x,y)_n \hookrightarrow \cB(x,y)\otimes t^n\,.
\end{eqnarray*}
Since $E$ inverts the morphism $\mathrm{inc}$, it also inverts the morphisms $\mathrm{ev}_0$ and $\mathrm{ev}_1$. Therefore, by applying the functor $E$ to the above diagram \eqref{eq:triangle-diagram}, we conclude that $E(i_0 \circ p) = \Id$. This achieves the proof.
\end{proof}
\begin{remark}
Note that Proposition \ref{prop:grading} holds more generally for every functor $E:\dgcat(k) \to \cT$ which inverts the inclusion dg functors $\mathrm{inc}: \cB \to \cB[t]$.
\end{remark}
\subsection*{All together}
We now have all the necessary ingredients to finish the proof of Theorem \ref{thm:main}. Firstly, by applying Proposition \ref{prop:grading} to the dg functors $\pi': \cA \to \cA/F^\bbN$ and $i: \cA \to \cA \ltimes \mathrm{B}_1$, we obtain the isomorphisms
\begin{eqnarray*}
E(\pi'): E(\cA) \stackrel{\sim}{\too} E(\cA/F^\bbZ) && E(i): E(\cA) \stackrel{\sim}{\too} E(\cA\ltimes \mathrm{B}_1)\,.
\end{eqnarray*}
Secondly, by combining these isomorphisms with the above commutative diagram \eqref{eq:com-square} and triangle \eqref{eq:triangle}, we obtain the searched distinguished triangle \eqref{eq:triangle-searched}. This concludes the proof.
\section{Remaining proofs}\label{sec:remaining}
\subsection*{Proof of Corollary \ref{cor:main}}
Item (i) follows from the long exact sequence of homotopy groups associated to the distinguished triangle \eqref{eq:triangle-searched} (with $E=KH$) and from the fact that $K_n(\cA)=0$ for $n <0$. In what concerns item (ii), consider the following diagram:
\begin{equation}\label{eq:diagram}
\xymatrix@C=2.5em@R=1.5em{
K_0(\cA) \ar[d]_-\sim \ar[rr]^-{K_0(F)-\Id} && K_0(\cA) \ar[d]_-\sim \ar[rr]^-{K_0(\pi)} && K_0(\cA/F^\bbZ) \ar[d] \\
KH_0(\cA) \ar[rr]_-{KH_0(F)-\Id} && KH_0(\cA) \ar[rr]_-{KH_0(\pi)} && KH_0(\cA/F^\bbZ)\,.
}
\end{equation}
If $\dgHo^0(\cA/F^\bbZ)$ is triangulated and idempotent complete, then $K_0(\cA/F^\bbZ)$ agrees with the Grothendieck group of $\dgHo^0(\cA/F^\bbZ)$. Consequently, since the functor $\dgHo^0(\pi)$ is (essentially) surjective, the morphism $K_0(\pi)$ is surjective. Making use of item (i), we then conclude that the right-hand side morphism in \eqref{eq:diagram} is an isomorphism. 
\subsection*{Proof of Proposition \ref{prop:suspension}}
Recall from \S\ref{sec:dg} that we denote by ${}_{\Sigma^n}\mathrm{B}$, resp. by ${}_{\Id} \mathrm{B}$, the $\cA\text{-}\cA$-bimodule associated to the suspension dg functor $\Sigma^n:\cA \to \cA$, resp. to the identity dg functor $\Id: \cA \to \cA$. These $\cA\text{-}\cA$-bimodules are related by the following equality ${}_{\Sigma^n}\mathrm{B} = ({}_{\Id} \mathrm{B})[n] \in \rep(\cA,\cA)$. Consequently, we conclude that 
$$U(\Sigma^n)=[({}_{\Id} \mathrm{B})[n]]=(-1)^n[{}_{\Id} \mathrm{B}]=(-1)^nU(\Id) = (-1)^n\Id\,.$$
Making use of Lemma \ref{lem:universal}, this then implies that $E(\Sigma^n)=(-1)^n \Id$. 
\subsection*{Proof of Proposition \ref{prop:finite}}
The dg functors $s_i: k \to \cD^b_\dg(A), 1 \leq i \leq m$, associated to the simple right $A$-modules $S_1, \ldots, S_m$ give rise to an isomorphism
\begin{equation}\label{eq:iso-plus}
\bigoplus_{i=1}^m U(s_i) : \bigoplus_{i=1}^m U(k) \stackrel{\sim}{\too} U(\cD^b_\dg(A))\,;
\end{equation}
see \cite[Rk.~3.19]{Azumaya}. Therefore, the proof of item (i) follows from Lemma \ref{lem:universal}. In what concerns item (ii), note that by applying the functor $\Hom_{\Hmo_0(k)}(U(k),-)$ to 
\begin{equation}\label{eq:U(F)}
U(F):U(\cD^b_\dg(A)) \stackrel{\sim}{\too} U(\cD^b_\dg(A))
\end{equation}
we obtain the associated group automorphism of $K_0(\cD^b(A))$. Via \eqref{eq:iso-plus}, this group latter automorphism identifies with $M(F): \bigoplus_{i=1}^m \bbZ \stackrel{\sim}{\to} \bigoplus_{i=1}^m \bbZ$. Therefore, since $\mathrm{End}_{\Hmo_0(k)}(U(k))\simeq\bbZ$, we conclude by the Yoneda lemma that \eqref{eq:U(F)} is given by the matrix $M(F)$. The proof follows now once again from Lemma \ref{lem:universal}.
\subsection*{Proof of Proposition \ref{prop:computation}}
Thanks to Corollary \ref{prop:cluster}, $K_0(\underline{\mathrm{MCM}}(R_s))$ identifies with the cokernel of the endomorphism $C_{A_s}-\Id$ of $\bigoplus_{i=1}^s \bbZ$. Recall from Auslander-Reiten-Smal$\o$ \cite[page~289]{ARS} that $C_{A_s}-\Id$ corresponds to the following matrix:
\begin{equation}\label{eq:matrix}
\begin{pmatrix}
-2 & 1 & 0  &\cdots &0 \\
-1 & -1 & \ddots &\ddots & \vdots\\
-1 & 0 & \ddots & \ddots & 0 \\
 \vdots& \vdots& \ddots & \ddots & 1 \\
 -1 &0 & \cdots& 0 & -1
\end{pmatrix}_{s \times s}\,.
\end{equation}
The proof follows now from the fact that the cokernel of \eqref{eq:matrix} is isomorphic to $\bbZ/(s+1)$. A canonical generator is given by the image of the vector $(0,\cdots, 0, -1)$.
\subsection*{Proof of Proposition \ref{prop:computation2}}
Thanks to Corollary \ref{prop:cluster}, $K_0(\underline{\mathrm{MCM}}(R))$ identifies with the cokernel of the endomorphism $-C_Q-\Id$ of $\bbZ\oplus \bbZ$. This endomorphism corresponds to the matrix $
\begin{pmatrix}
-9 & 3 \\
-3 & 0
\end{pmatrix}$. Therefore, the proof follows from the fact that the cokernel of this matrix is isomorphic to $\bbZ/3 \times \bbZ/3$. Canonical generators are given by the image of the vectors $(1,0)$ and $(-3,-1)$.
\subsection*{Proof of Proposition \ref{prop:line}}
Thanks to Proposition \ref{prop:suspension}, it suffices to show that via \eqref{eq:Hd} the morphism $HP^\pm(-\otimes \cL)$ identifies with 
\begin{equation}\label{eq:morph-2}
-\cdot \mathrm{ch}(\cL): \bigoplus_n H^n_{dR}(X) \too \bigoplus_n H^n_{dR}(X)\,.
\end{equation}
The dg functor $-\otimes \cL$ can be written as the following Fourier-Mukai dg functor:
\begin{eqnarray*}
\Phi_{\Delta_\ast(\cL)}:\perf_\dg(X) \too \perf_\dg(X) && \cF \mapsto q_\ast(p^\ast(\cF) \otimes \Delta_\ast(\cL))\,.
\end{eqnarray*}
Consequently, as proved in \cite[Thm.~6.7]{Lefschetz}, the morphism \eqref{eq:morph-2} identifies with
\begin{eqnarray*}
\bigoplus_n H^n_{dR}(X) \too \bigoplus_n H^n_{dR}(X) && \alpha \mapsto q_\ast\left(p^\ast(\alpha)  \cdot \mathrm{ch}(\Delta_\ast(\cL)) \cdot \sqrt{\mathrm{Td}_{X \times X}}\right)\,,
\end{eqnarray*}
Let us now show that this morphism identifies with \eqref{eq:morph-2}. Consider the equalities:
\begin{eqnarray}
\mathrm{ch}(\Delta_\ast(\cL)) \cdot \mathrm{Td}_{X \times X} & = & \Delta_\ast(\mathrm{ch}(\cL) \cdot \mathrm{Td}_X) \label{eq:equality1} \\
& = & \Delta_\ast(\mathrm{ch}(\cL) \cdot \Delta^\ast(\sqrt{\mathrm{Td}_{X \times X}})) \label{eq:equality2} \\
& = & \Delta_\ast(\mathrm{ch}(\cL)) \cdot \sqrt{\mathrm{Td}_{X \times X}}\,. \label{eq:equality3}
\end{eqnarray}
Some explanations are in order: \eqref{eq:equality1} follows from the Grothendieck-Riemann-Roch theorem applied to the morphism $\Delta$, \eqref{eq:equality2} follows from the fact that $\mathrm{Td}_X = \Delta^\ast(\sqrt{\mathrm{Td}_{X \times X}})$, and \eqref{eq:equality3} follows from the projection formula. If we divide by $\sqrt{\mathrm{Td}_{X \times X}}$, we then obtain the equality $ \mathrm{ch}(\Delta_\ast(\cL)) \cdot \sqrt{\mathrm{Td}_{X \times X}} = \Delta_\ast(\mathrm{ch}(\cL))$. Consequently, the above morphism identifies with
\begin{eqnarray}
\alpha & \mapsto & q_\ast(p^\ast(\alpha) \cdot \Delta_\ast(\mathrm{ch}(\cL))) \nonumber \\
& = & q_\ast(\Delta_\ast(\Delta^\ast(p^\ast(\alpha))\cdot \mathrm{ch}(\cL))) = q_\ast(\Delta_\ast(\alpha \cdot \mathrm{ch}(\cL)))= \alpha \cdot \mathrm{ch}(\cL) \label{eq:equality5} \,
\end{eqnarray}
where \eqref{eq:equality5} follows from the projection formula and from the equalities $p \circ \Delta = q \circ \Delta = \Id$. This achieves the proof. 
\subsection*{Proof of Proposition \ref{prop:reduction}}
Recall from \cite[\S3]{Lefschetz} that we have the bijection 
\begin{eqnarray*}
\mathrm{Iso}\,\perf(X\times X) \stackrel{\sim}{\too} \Hom_{\Hmo(k)}(\perf_\dg(X), \perf_\dg(X)) && \cE \mapsto \Phi_\cE\,,
\end{eqnarray*}
where $\Phi_\cE$ stands for the Fourier-Mukai dg functor $ \cF \mapsto q_\ast(p^\ast(\cF) \otimes \cE)$. Consequently, the morphism $U(\Phi)$ identifies with the Grothendieck class of 
$$ \mathrm{cone}\left(q^\ast(\cE^\vee)\otimes p^\ast(\cE) \to \Delta_\ast(\cO_X)\right) \in \perf(X \times X)\,.$$
Since this class is given by $[\Delta_\ast(\cO_X)] - [q^\ast(\cE^\vee)\otimes p^\ast(\cE)]$ and $\Phi_{\Delta_\ast(\cO_X)}=\Id$, we hence conclude that $U(\Phi)=\Id-U(\Phi')$. Making use of Lemma \ref{lem:universal}, this then implies that $E(\Phi)=\Id-E(\Phi')$.
\subsection*{Mukai pairing}
Since the $k$-scheme $X$ is proper, all the complexes of $k$-vector spaces $\perf_\dg(X)(\cF,\cF')$ belongs to $\cD_c(k)$. Consequently, the bimodule \eqref{eq:bimodule-Id} (with $\cA=\perf_\dg(X)$) corresponds to a morphism in the homotopy category $\Hmo(k)$:
\begin{equation}\label{eq:morphism}
\perf_\dg(X) \otimes \perf_\dg(X)^\op \too k\,.
\end{equation}
By first pre-composing \eqref{eq:morphism} with the Morita equivalence 
\begin{eqnarray*}
\perf_\dg(X) \otimes \perf_\dg(X) \too \perf_\dg(X) \otimes \perf_\dg(X)^\op && (\cF,\cG) \mapsto (\cF,\cG^\vee)\,,
\end{eqnarray*} 
and then by applying the symmetric monoidal functor $HP^\pm$ (see \cite[\S8]{Lefschetz}), we obtain (via the identification \eqref{eq:Hd}) the searched Mukai pairing:
\begin{equation*}
\langle -, -\rangle : \bigoplus_n H^n_{dR}(X)\otimes \bigoplus_n H^n_{dR}(X) \too k\,.
\end{equation*}
\subsection*{Proof of Proposition \ref{prop:spherical}}
By an abuse of notation, let us still denote by $\cE$, resp. by $\cE^\vee$, the dg functor $k \to \perf_\dg(X)$, resp. $k \to \perf_\dg(X)^\op$, associated to the perfect complex $\cE$, resp. to $\cE^\vee$. Under these notations, the bimodule ${}_{\Phi'} \mathrm{B}$ associated to the Fourier-Mukai dg functor $\Phi'$ corresponds to the following composition
$$ \perf_\dg(X) \stackrel{{}_{\Id}\mathrm{B} \otimes {}_{\cE^\vee}\mathrm{B}\otimes {}_{\cE}\mathrm{B}}{\too} \perf_\dg(X) \otimes \perf_\dg(X)^\op \otimes \perf_\dg(X) \stackrel{\eqref{eq:morphism}\otimes {}_{\Id} \mathrm{B}}{\too} \perf_\dg(X)\,.$$
Therefore, by applying the symmetric monoidal functor $HP^\pm$ to this composition, we conclude that via \eqref{eq:Hd}, $HP^\pm(\Phi')$ agrees with the following homomorphism:
\begin{eqnarray*}
\bigoplus_n H^n_{dR}(X) \too \bigoplus_n H^n_{dR}(X) && \alpha \mapsto \langle \alpha, HP^\pm(\cE)\rangle \cdot HP^\pm(\cE)\,.
\end{eqnarray*}
The proof follows now from the equality $HP^\pm(\cE)=\mathrm{ch}(\cE) \cdot \sqrt{\mathrm{Td}_X}$; see \cite[Thm.~6.7]{Lefschetz}.

\end{document}